\numberwithin{equation}{section}
\newtheorem{theorem}{Theorem}[section]
\newtheorem{proposition}[theorem]{Proposition}
\newtheorem{corollary}[theorem]{Corollary}
\newtheorem{lemma}[theorem]{Lemma}
\theoremstyle{definition}
\newtheorem{definition}[theorem]{Definition}
\theoremstyle{remark}
\title[Fractional $p$-Kirchhoff with Sobolev and Choquard nonlinearities]{Fractional $p$-Kirchhoff equation with Sobolev and Choquard singular nonlinearities}
\author[Assun\c{c}\~{a}o]{Ronaldo B. Assun\c{c}\~{a}o}
\address{Ronaldo B. Assun\c{c}\~{a}o \hfill\break\indent
Departamento de Matem\'{a}tica\,---\,
Universidade Federal de Minas Gerais, 
UFMG 
}
\email{ronaldo@mat.ufmg.br}
\author[Miyagaki]{Ol\'{\i}mpio H. Miyagaki}
\thanks{Ol\'{\i}mpio H. Miyagaki was supported by Grant 2022/16407-1\,---\,S\~{a}o Paulo Research Foundation (FAPESP) and Grant 303256/2022-2\,---\,CNPq/Brazil.}
\address{Ol\'{\i}mpio H. Miyagaki \hfill\break\indent
Departamento de Matem\'{a}tica\,---\,
Universidade Federal de S\~{a}o Carlos, 
UFSCar 
} 
\email{ohmiyagaki@gmail.com}
\author[Siqueira]{Rafaella F. S. Siqueira}
\thanks{Rafaella F. S. Siqueira was partially supported by CNPq/Brazil.}
\address{Rafaella F. S. Siqueira 
\hfill\break\indent
Departamento de Matem\'{a}tica\,---\,
Centro Federal de Educa\c{c}\~{a}o Tecnol\'{o}gica, 
CEFET 
}
\email{rafaella.siqueira@cefetmg.br}
\date{Belo Horizonte, \today}
\keywords{%
Fractional $p$-Laplacian operator,
doubly critical singular problem,
variational methods, 
weighted Sobolev spaces,
weighted Morrey spaces,
Caffarelli-Kohn-Nirenberg inequality.}
\subjclass[2010]{%
Primary: %
35B33;
35J92; 
35R11.     
Secondary: %
35A23,
35B38, 
35J20. 
}
\begin{document}
\begin{abstract}
In the present work, we consider the following fractional $p$-Kirchhoff equation in the entire space $\mathbb{R}^{N}$ featuring doubly nonlinearities, involving a generalized nonlocal Choquard subcritical term together with a local critical Sobolev term; the problem also includes a Hardy-type term; additionaly, all terms have critical singular weights.
More precisely, we deal with the problem
  \begin{multline*}
  m(\|u\|^p_{W_{\theta}^{s,p}(\mathbb{R}^N)})\Bigl[(-\Delta)^{s}_{p,\theta} u 
   +V(x) \dfrac{|u|^{p-2}u}{|x|^{\alpha}} \Bigr]  \\
   = \dfrac{|u|^{p^*_s(\beta,\theta)-2}u}%
     {|x|^{\beta}} 
     + \lambda\left[
       I_{\mu} \ast \dfrac{F_{\delta,\theta,\mu}(\cdot, u)}{|x|^{\delta}} 
       \right](x)\dfrac{f_{\delta,\theta,\mu}(x,u)}{|x|^{\delta}}
   \end{multline*}
where 
$0 < s  < 1$; $0<\alpha<N-\mu$;
$0 < \beta < sp + \theta < N$; 
$0 < \mu < N$; $2\delta + \mu < N$; $p^*_s(\beta, \theta)= p(N-\beta)/(N-sp-\theta)$. The function $m \colon \mathbb{R}_0^+ \to \mathbb{R}^+$ is a Kirchhoff function; the potential function $V \colon \mathbb{R}^N \to \mathbb{R}^+$ is continuous; the nonlinearity $f \colon \mathbb{R} \to \mathbb{R}$ is continuous and define $F(s)=\int _0^sf(t)\dd t$; the $I_{\mu}\colon \mathbb{R}^N \to \mathbb{R}$ is defined by $I_{\mu}(x)= |x|^{-\mu}$ and is called the Riesz potential. The fractional $p$-Laplacian operator is defined 
for $\theta=\theta_{1}+\theta_{2}$, $x\in\mathbb{R}^{N}$, and any function $u \in C_{0}^{\infty}(\mathbb{R}^{N})$, as
\begin{align*}
(-\Delta)_{p,\theta}^{s}u(x)
&\coloneqq
\textup{p.v.\xspace}
\int_{\mathbb{R}^{N}} 
\dfrac{|u(x)-u(y)|^{p-2}(u(x)-u(y))}{|x|^{\theta_{1}}
|x-y|^{N+sp}|y|^{\theta_{2}}}\dd{y},
\end{align*} 
where p.v.\xspace\ is the
Cauchy's principal value. Our result improve upon previous work in the following ways: we focus our attention on the existence of a nontrivial weak solution for fractional $p$-Kirchhoff equation in the entire space $\mathbb{R}^{N}$. The possibility of a slower growth in the nonlinearity makes it more difficult to establish a compactness condition; to do so, we use the Cerami condition. The crucial points in our argument are the uniform boundedness of the convolution part and the lack of compactness of the Sobolev embeddings.
\end{abstract}

\maketitle
\tableofcontents

\section{Model problem and main result}
In the present work, we consider the following fractional $p$-Kirchhoff equation in the entire space $\mathbb{R}^{N}$ featuring doubly nonlinearities, involving a generalized nonlocal Choquard subcritical term together with a local critical Sobolev term; the problem also includes a Hardy-type term; additionaly, all terms have critical singular weights.
More precisely, we deal with the problem
  \begin{multline}
  \label{problema:Kirchhof}
  m(\|u\|^p_{W_{\theta}^{s,p}(\mathbb{R}^N)})\Bigl[(-\Delta)^{s}_{p,\theta} u 
   +V(x) \dfrac{|u|^{p-2}u}{|x|^{\alpha}} \Bigr]  \\
   = \dfrac{|u|^{p^*_s(\beta,\theta)-2}u}%
     {|x|^{\beta}} 
     + \lambda\left[
       I_{\mu} \ast \dfrac{F_{\delta,\theta,\mu}(\cdot, u)}{|x|^{\delta}} 
       \right](x)\dfrac{f_{\delta,\theta,\mu}(x,u)}{|x|^{\delta}}
   \end{multline}
where 
$0 < s  < 1$; $0<\alpha<N-\mu$;
$0 < \beta < sp + \theta < N$; 
$0 < \mu < N$; $2\delta + \mu < N$; $p^*_s(\beta, \theta)= p(N-\beta)/(N-sp-\theta)$. The function $m \colon \mathbb{R}_0^+ \to \mathbb{R}^+$ is a Kirchhoff function; the potential function $V \colon \mathbb{R}^N \to \mathbb{R}^+$ is continuous; the nonlinearity $f \colon \mathbb{R} \to \mathbb{R}$ is continuous and define $F(s)=\int _0^sf(t)\dd t$; the $I_{\mu}\colon \mathbb{R}^N \to \mathbb{R}$ is defined by $I_{\mu}(x)= |x|^{-\mu}$ and is called the Riesz potential. The fractional $p$-Laplacian operator is defined 
for $\theta=\theta_{1}+\theta_{2}$, $x\in\mathbb{R}^{N}$, and any function $u \in C_{0}^{\infty}(\mathbb{R}^{N})$, as
\begin{align*}
(-\Delta)_{p,\theta}^{s}u(x)
&\coloneqq
\textup{p.v.}
\int_{\mathbb{R}^{N}} 
\dfrac{|u(x)-u(y)|^{p-2}(u(x)-u(y))}{|x|^{\theta_{1}}
|x-y|^{N+sp}|y|^{\theta_{2}}}\dd{y},
\end{align*} 
where p.v. is the
Cauchy's principal value.

Let us now introduce the spaces of functions that are meaningful to our considerations. Throughout this work, 
we denote the norm of the weighted Lebesgue space
$L^p_V(\mathbb{R}^{N},|x|^{-\eta})$ by
\begin{align*}
\|u\|_{L^p_V(\mathbb{R}^{N};|x|^{-\eta})}
& \coloneqq
 \Bigl(
\int_{\mathbb{R}^{N}} \dfrac{V(x)|u|^{p}}{|x|^{\eta}}\dd{x} \Bigr)^{\frac{1}{p}}
\end{align*}
for any 
$0\leqslant \eta < N$ and $1 \leqslant p < +\infty$.

We can equip the homogeneous fractional Sobolev space 
${W}^{s,p}_{V, \theta}(\mathbb{R}^N)$ 
with the norm 
\begin{align*}
\|u \|_{{W}_{V,\theta}^{s,p} (\mathbb{R}^N)} 
 = \|u \|_{W} & \coloneqq
\Bigl(
 \iint_{\mathbb{R}^{2N}} 
\dfrac{|u(x)-u(y)|^{p}}%
{|x|^{\theta _1}|x-y|^{N+sp}|y|^{\theta _2}}
\dd x \dd y
+
\int_{\mathbb{R}^N}  
\dfrac{V(x)|u|^{p}}{|x|^{\alpha}}
\dd x
\Bigr)^{\frac{1}{p}}\\
& \coloneqq \Big(
[u]_{{W}_{\theta}^{s,p} (\mathbb{R}^N)}^{p}
+ \|u \|_{L^p_V (\mathbb{R}^N; |x|^{-\alpha})}^{p}
\Bigr)^{\frac{1}{p}}.
\end{align*} 
The embedding $W^{s,p}_{V, \theta}(\mathbb{R}^N)\hookrightarrow
L^{\nu}_V(\mathbb{R}^N,|x|^{-\alpha})$ is continuous for any $\nu\in [p,\frac{p(N-\beta)}{N-ps-\theta}]$ and $0< \alpha < N-\mu$, namely there exists a positive constant
$C_\nu$ such that
\begin{align}\label{sobem}
\|u\|_{L^{\nu}_V(\mathbb{R}^N,|x|^{-\alpha})}\leqslant C_\nu \|u\|_W\quad\text{for all }
u\in W^{s,p}_{V,\theta}(\mathbb{R}^N).
\end{align}

The potential function  $V\colon \mathbb{R}^N \to \mathbb{R}^+$ verifies the following assumption
\begin{enumerate}[label=\eqref{hip:V}, 
ref=$V$]
    \item \label{hip:V} $V$ is continuous and there exists $V_0>0$ such that $\inf_{\mathbb{R}^N} V \geqslant V_0.$
\end{enumerate}

Moreover, we assume that the nonlinearities $f, F \colon \mathbb{R}^N \times \mathbb{R} \to \mathbb{R}$ verify the hypotheses
\begin{enumerate}[label=($F\sb{\arabic*}$), 
ref=$F\sb{\arabic*}$]
    \item \label{hip:F1} $F \in C^1(\mathbb{R}, \mathbb{R})$.
    \item \label{hip:F2} There exist constants 
    \begin{align}
    \label{def:psus}
       p_s^{\flat}(\delta, \mu) \coloneqq \dfrac{(N-\delta -\mu/2)p}{N} < q_1 \leqslant q_2 < \dfrac{(N-\delta -\mu/2)p}{N-sp-\theta} \eqqcolon p_s^{\sharp}(\delta, \theta, \mu)
    \end{align}
    and  $c_0>0$ such that for all $t \in \mathbb{R}$, $|f(t)|\leqslant c_0 (|t|^{q_1-1}+ |t|^{q_2-1})$.
    The exponents $p_s^{\flat}(\delta,\mu)$ and $p_s^{\sharp}(\delta, \theta, \mu)$ are named lower and upper critical exponents in the sense of Hardy-Littlewood-Sobolev inequality.
    \item \label{hip:F3} $\lim\limits_{|u(x)| \to \infty} \dfrac{F(u(x))}{|x|^{\delta}|u(x)|^{p \xi}}= \infty$ uniformly with respect to $x \in \mathbb{R}^N$ for $\xi \in [1, 2p_s^{\flat}(\delta, \mu)/p)$.
    \item  \label{hip:F4} There exist constants $r_0\geqslant 0$, $\kappa>\dfrac{N-\beta}{ps+\theta-\beta}$ and $c_1\geqslant 0$ such that for $|t|\geqslant r_0$, 
\begin{align*}
\frac{|F(t)|}{|x|^{\delta}}^\kappa\leqslant c_1|t|^{\kappa p}\mathscr{F}(t),\qquad 
\mathscr{F}(t)\coloneqq\dfrac{1}{p\xi}\dfrac{f(t)}{|x|^{\delta}}t-\dfrac{1}{2}\dfrac{F(t)}{|x|^{\delta}}\geqslant 0.
\end{align*}
\end{enumerate}

With respect to the Kirchhoff function  $m \colon \mathbb{R}_0^+ \to \mathbb{R}^+$ we make the following assumptions.
\begin{enumerate}[label=($m\sb{\arabic*}$), 
ref=$m\sb{\arabic*}$]
    \item \label{hip:m1} $m$ is a continuous function and there exists $m_0>0$ such that $\inf_{t \geqslant 0} m(t)=m_0$.
    \item \label{hip:m2} There exists $\xi \in [1, 2p_s^{\flat}(\delta, \mu)/p)$ such that $m(t)t \leqslant \xi M(t)$ for all $t\geqslant 0$,
    where $M(t)= \int_0^t m(\tau)\dd \tau$.
\end{enumerate}
A typical example is $m(t)=a + b \xi t ^{\xi -1}$ for $t\geqslant 0$, 
where $a \geqslant 0$, $b \geqslant 0$, $a+b>0$, $\xi \in (1,2p_s^{\flat}(\delta, \mu)/p)$ if $b>0$ and $\xi = 1$ if $b=0$; this is called non-degenerate when $a>0$ and $b\geqslant 0$ and is called degenerate if $a=0$ and $b>0$.

Our main goal in this work is to show that 
problem~\eqref{problema:Kirchhof} admits at least a nontrivial weak solution, by which term we mean a function 
$u \in W_{V,\theta}^{s,p}(\mathbb{R}^N)$ such that
\begin{multline*}
m (\|u\|^p_W)\\
\times\left[
\iint_{\mathbb{R}^{2N}} 
\dfrac{|u(x)-u(y)|^{p-2}
(u(x)-u(y))(\phi(x)-\phi(y))}{|x|^{\theta_1}|x-y|^{N+sp}|y|^{\theta_2}
}
\dd{x}\dd {y}
 + 
 \int_{\mathbb{R}^{N}} 
 \dfrac{V(x)|u|^{p-2}u \phi}{|x|^{\alpha}}\dd{x}\right] \\
 = \int_{\mathbb{R}^{N}} 
\dfrac{|u|^{p_{s}^{\ast}(\beta,\theta)-2}u \phi}%
{|x|^{\beta}}\dd{x} + \lambda\int_{\mathbb{R}^{N}}
 \Bigl(I_{\mu} \ast \frac{F_{\delta, \theta, \mu}(u)}{|x|^{\delta}}\Bigr)\frac{f_{\delta, \theta, \mu}(u)}{|x|^{\delta}}\phi \dd x
\end{multline*}
for any test function $\phi \in {W}^{s,p}_{V,\theta}(\mathbb{R}^N)$. 

Now we define the energy functional 
$I \colon {W}^{s,p}_{V,\theta}(\mathbb{R}^N)
\to \mathbb{R}$ by
\begin{align}
\label{kirchhoff:funcionalI}
I(u)
& \coloneqq
\dfrac{1}{p}
M(\|u\|^p_{W})
-\dfrac{1}{p_{s}^{\ast}(\beta,\theta)}
\int_{\mathbb{R}^{N}} 
\dfrac{|u|^{p_{s}^{\ast}(\beta,\theta)}}{|x|^{\beta}} \dd{x} 
-\dfrac{\lambda}{2 }
\iint_{\mathbb{R}^{2N}}
\dfrac{F_{\delta, \theta, \mu}(u(x))F_{\delta, \theta, \mu}(u(y))}%
       {|x|^{\delta}
        |x-y|^{\mu}
        |y|^{\delta}}
        \dd{x}\dd{y} \nonumber \\
& \eqqcolon \Phi(u) - \Xi(u) - \lambda \Psi(u).\end{align}
For the parameters in the previously specified intervals, the energy functional $I$ is well defined and is continuously differentiable, 
i.e., \xspace $I \in C^{1}({W}^{s,p}_{V, \theta}(\mathbb{R}^N);\mathbb{R})$; moreover, a nontrivial critical point of the energy functional $I$ is a nontrivial weak solution to problem~\eqref{problema:Kirchhof}.

The main result is as follows.

\begin{theorem}
\label{teo:chen}
Let $0<\mu<ps+ \theta <N$; suppose \eqref{hip:V}, \eqref{hip:m1} -- \eqref{hip:m2} and \eqref{hip:F1} -- \eqref{hip:F4} hold. Then problem~\eqref{problema:Kirchhof} has a nontrivial weak solution for any $\lambda>0$.
\end{theorem}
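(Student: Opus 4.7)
The plan is to find a nontrivial weak solution to~\eqref{problema:Kirchhof} as a critical point of the energy functional $I$ via a Mountain Pass theorem built on Cerami sequences (necessary because the nonlinearity $f$ need not satisfy an Ambrosetti-Rabinowitz condition). The four principal steps are: verify the Mountain Pass geometry of $I$; produce a bounded Cerami sequence at the minimax level; recover compactness of this sequence by estimating the minimax level below a critical threshold; and identify the weak limit as a nontrivial critical point of $I$.

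First, I would verify the geometry. Hypothesis~\eqref{hip:m1} gives $\Phi(u)\geq (m_0/p)\|u\|_W^p$. Using~\eqref{hip:F2}, the Hardy-Littlewood-Sobolev inequality (for the Choquard term) and the embedding~\eqref{sobem}, both $\Xi(u)$ and $\Psi(u)$ are dominated by powers of $\|u\|_W$ strictly greater than $p$, which furnishes $\rho,\alpha_0>0$ with $I(u)\geq \alpha_0$ on $\{\|u\|_W=\rho\}$. For the second Mountain Pass condition, fix $u_0\not\equiv 0$; hypothesis~\eqref{hip:m2} yields $M(t)\leq M(1)t^\xi$ for $t\geq 1$, so the Kirchhoff piece grows no faster than $\|tu_0\|_W^{p\xi}$, whereas~\eqref{hip:F3} forces the Choquard piece to dominate, giving $I(tu_0)\to -\infty$.

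Next, the Mountain Pass theorem provides a Cerami sequence $(u_n)$ at a level $c_{\text{MP}}\geq \alpha_0>0$. To show $(u_n)$ is bounded I form $I(u_n)-(p\xi)^{-1}\langle I'(u_n),u_n\rangle$: the Kirchhoff contribution is nonnegative by~\eqref{hip:m2}; the Sobolev contribution carries the factor $(p\xi)^{-1}-p_s^\ast(\beta,\theta)^{-1}$, positive under the standing parameter range; and the Choquard contribution reduces to a multiple of the $\mathscr{F}$-integral in~\eqref{hip:F4}, which is nonnegative. Assuming for contradiction $\|u_n\|_W\to\infty$, I would rescale $v_n\coloneqq u_n/\|u_n\|_W$ and combine~\eqref{hip:F3} and~\eqref{hip:F4} with a vanishing/non-vanishing dichotomy to derive a contradiction. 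Thus $(u_n)$ is bounded and, up to subsequence, $u_n\rightharpoonup u$ in $W^{s,p}_{V,\theta}(\mathbb{R}^N)$, pointwise a.e., and strongly in weighted $L^q$ spaces at subcritical growth.

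The decisive difficulty is to pass to the limit in the critical Sobolev-weighted term and to guarantee $u\not\equiv 0$. A concentration-compactness analysis on the weighted fractional $p$-Sobolev space, combined with a Brezis-Lieb splitting for the double Choquard integral (itself enabled by Hardy-Littlewood-Sobolev), reduces strong convergence $u_n\to u$ to the strict inequality
\[
c_{\text{MP}}<c^\ast\coloneqq\Bigl(\tfrac{1}{p\xi}-\tfrac{1}{p_s^\ast(\beta,\theta)}\Bigr)\bigl(m_0\, S_{H,L}\bigr)^{\frac{N-\beta}{sp+\theta-\beta}},
\]
where $S_{H,L}$ is the best constant in the Caffarelli-Kohn-Nirenberg embedding $[u]_{W^{s,p}_{\theta}(\mathbb{R}^N)}\hookrightarrow L^{p_s^\ast(\beta,\theta)}(\mathbb{R}^N;|x|^{-\beta})$. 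To secure this, I would test $I$ along the path $t\mapsto tU_\epsilon$, where $U_\epsilon$ is a suitably rescaled and truncated extremal profile for $S_{H,L}$, and perform an asymptotic expansion in $\epsilon$: the leading Sobolev order reproduces $c^\ast$, while~\eqref{hip:F3} ensures the subcritical Choquard term provides a strictly negative correction of smaller order in $\epsilon$ that tips the balance for all $\lambda>0$. Once $c_{\text{MP}}<c^\ast$ is in hand, strong convergence yields that $u$ is a critical point of $I$ with $I(u)=c_{\text{MP}}>0$, hence a nontrivial weak solution of~\eqref{problema:Kirchhof}. This minimax level estimate, together with the concentration-compactness analysis required by the doubly nonlocal, weighted setting, is the main technical obstacle.
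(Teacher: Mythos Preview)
Your proposal diverges from the paper's argument precisely at the compactness step. The paper does \emph{not} run a concentration--compactness analysis, does \emph{not} estimate the minimax level below a critical threshold $c^\ast$, and does \emph{not} use extremal profiles for $S_{H,L}$. Instead it invokes Lemma~\ref{sobolev} (proved in the authors' companion paper~\cite{assunção2023fractional}), which asserts that the critical Sobolev functional $\Xi$ and its derivative $\Xi'$ are weakly--strongly continuous on $W^{s,p}_{V,\theta}(\mathbb{R}^N)$; together with Lemma~\ref{lema2a} for the Choquard piece $\Psi$, this leaves only the principal part as an obstruction to strong convergence of a bounded Cerami sequence, and that is dispatched by Simon's inequalities (Lemma~\ref{mage2}). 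The upshot is that $(C)_c$ holds at \emph{every} level $c$, which is exactly what delivers the conclusion for all $\lambda>0$ without any threshold bookkeeping.

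Your route is the classical Brezis--Nirenberg strategy and would be a reasonable alternative if its ingredients were in place, but two points deserve scrutiny. First, explicit extremals $U_\epsilon$ for the doubly-weighted fractional Hardy--Sobolev constant (with the weights $|x|^{-\theta_1},|x|^{-\theta_2}$ in the Gagliardo seminorm and $|x|^{-\beta}$ in the target) are not, to my knowledge, available in closed form, so the fine $\epsilon$-expansion you rely on may not be executable. Second, the assertion that the subcritical Choquard correction forces $c_{\mathrm{MP}}<c^\ast$ \emph{for every $\lambda>0$} is optimistic: in most critical-plus-subcritical schemes this inequality is secured only for $\lambda$ large, or for perturbation exponents in a restricted window, and \eqref{hip:F3} alone does not obviously yield it uniformly in $\lambda$. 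The paper sidesteps both issues by appealing to Lemma~\ref{sobolev}. A smaller discrepancy: for the second mountain-pass condition the paper uses the dilation path $v_t(x)=v_0(x/t)$ rather than the ray $t\mapsto t u_0$, exploiting the scaling exponents $(N-sp-\theta)\xi<N-\beta<2(N-\delta-\mu/2)$ directly.
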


\section{Historical background}
The study of nonlocal problems driven by the fractional and nonlocal operators has received a tremendous popularity because of their
intriguing structure and the great variety of phenomena occurring in real-world applications that can be modeled by these
equations such as optimization, finance, phase transition phenomena, anomalous diffusion etc. 
For more information on nonlocal and fractional problems, see the excelent survey paper by Di Nezza,
Palatucci \& Valdinoci~\cite{MR2944369}; see also the book by Molica Bisci, R\u{a}dulescu \& Servadei~\cite{MR3445279}.

\subsection{The Choquard equation}
On the Euclidean space $\mathbb{R}^{N}$, the equation
\begin{align*}
-\Delta u + V(x) u & = (I_{\mu} \ast |u|^{q}) |u|^{q-2}u \qquad (x\in \mathbb{R}^{N})
\end{align*}
was introduced by Choquard in the case $N=3$ and $q=2$ to model one-component plasma. It had appeared earlier in the model of the polaron by Fr\"{o}lich and Pekar, where free electrons interact with the polarisation that they create on the medium. A remarkable feature in the Choquard nonlinearity is the appearance of a lower nonlinear restriction, usually called the lower critical exponent $2^{\flat}>1$, that is, the nonlinearity is superlinear. When $V(x) \equiv 1$, the groundstate solutions exist if 
$2^{\flat}\coloneqq 2(N-\mu/2))/N < q < 2(N-\mu/2)/(N-2s) \coloneqq 2^{\sharp}$ due to the mountain
pass lemma or the method of the Nehari manifold, 
while there are no nontrivial solution if $q = 2^{\flat}$ or if $q = 2^{\sharp}$ as
a consequence of the Pohozaev identity.

In general, the associated Schr\"{o}dinger-type evolution equation
$i \partial_{t} \psi 
= \Delta \psi
+ \big( I_{\mu} \ast |\psi|^{2} \big) \psi$
is a model for large systems of atoms with an attractive interaction that is weaker and has a longer range than that of the nonlinear Schr\"{o}dinger equation. 
Standing wave solutions of this equation
are solutions to the Choquard equation. 
For more information on the various results related to the non-fractional
Choquard-type equations and their variants see the survey by Moroz \& Van Schaftingen~\cite{MR3625092}.

\subsection{Kirchhoff type problems}

Kirchhoff type problems have been widely studied in recent years. After Lions has presented an abstract functional framework to use for Kirchhoff type equations, this problem has been widely studied in extensive literature. Alves, Corr\^{e}a \& Figueiredo~\cite{alves2010class} investigated the existence of positive solutions to the class of nonlocal boundary value problems of the Kirchhoff type with the classical Laplace operator. For other papers involving the Kirchhoff type problems with the classical Laplace operator see Chen \& Li~\cite{chen2013multiple} and Figueiredo~\cite{ figueiredo2013existence}. For the $p$-Laplacian case, Colasuanno \& Pucci~\cite{colasuonno2011multiplicity} established the existence of infinitely many solutions for Dirichlet problems involving the $p$-polyharmonic operators on bounded domains. For the fractional Kirchhoff problem, Fiscella \& Valdinoci~\cite{fiscella2014critical} proposed a stationary fractional Kirchhoff variational problem which takes into account the nonlocal aspect of the tension arising
from nonlocal measurements of the fractional length of the string. It was pointed out in Pucci, Xiang \& Zhang~\cite{MR3975603} that equations like 
$-\bigl(a+b\int_{\mathbb{R}^{N}} |\nabla u|^{2}\dd{x}\bigr)\Delta u + u = k f(u)+|u|^{2^{*}-2}u
$
in the whole space $\mathbb{R}^{N}$ can be applied to describe the growth and movement of a specific species.
Song \& Shi~\cite{song2017existence} considered a class of degenerate fractional $p$-Laplacian equation of Schrödinger–Kirchhoﬀ with critical Hardy–Sobolev nonlinearities. The main feature and diﬃculty of this article is the fact that the Kirchhoﬀ term could be zero at zero, that is, the problem equation is degenerate.
The degenerate case was
studied by Autuori, Fiscella \& Pucci in~\cite{autuori2015stationary}, by introducing a new technical approach based on the asymptotic property of the critical mountain pass level; 
they established the existence and the asymptotic behavior of non-negative solutions to the problem. Furthermore, the existence of a solution for different critical fractional Kirchhoff problems set
on the whole space $\mathbb{R}^N$ is given by Liang \& Shi~\cite{liang2013soliton}.
More recently, Chen~\cite{chen2019existence} establish the existence of solutions to the fractional $p$-Kirchhoff type equations with a generalized Choquard nonlinearities without assuming the Ambrosetti Rabinowitz condition.

\subsection{The kinds and varieties of potential functions}
Several hypotheses have been used on the potential function included in the class of elliptical problems. For example, 
Berestycki \& Lions~\cite{berestycki1983nonlinear} considered the case in which the potential function $V\colon \mathbb{R}^N \to \mathbb{R}$ is constant. Afterwards, Pankov~\cite{pankov2004periodic} studied a problem with $V \in L^{\infty}$ being a periodic function with unit period in each variable, that is, $V(x+z)=V(x)$ for all $x\in \mathbb{R}^N$ and with $z \in \mathbb{Z}^N$; for other articles about problems with periodic potentials, see Coti-Zelati \& Rabinowitz~\cite{sissa1992homoclinic} and Kryszewski \& Szulkin~\cite{kryszewski1998generalized}. Zhu \& Yang~\cite{jianfu1987existence} studied problems with an asymptotic potential V at a positive constant, i.e., there is $V_{\infty} \in \mathbb{R}_+$ such that $|V(x)-V_{\infty}| \to 0$ when $|x|\to +\infty$ and $V(x) \leqslant V_{\infty}$ for all $x \in \mathbb{R}^N$. For the case where the potential $V$ is strictly positive and the Lebesgue measure of the set $\{x \in \mathbb{R}^N \colon V(x)\leqslant M\}$ is finite for all $M \in \mathbb{R}_+$, see Bartsch \& Wang~\cite{bartsch1995existence}. Costa~\cite{costa1994class} and Miyagaki~\cite{miyagaki1997class} studied the case of a coercive potential  $V \colon \mathbb{R}^N \to \mathbb{R}$, that is, $\lim_{|x| \to + \infty} V(x)=+\infty$. For the case of a radial potential $V\colon \mathbb{R}^N \to \mathbb{R}$, i.e., $V(x)=W(r)$ such that $W \colon \mathbb{R}^*_+ \to \mathbb{R}$ and $r=|x|$ for all $x \in \mathbb{R}^N$, see Alves, de Morais Filho \& Souto~\cite{alves1996radially}. Alves \& Souto~\cite{alves2012existence,alves2013existence} considered a continuous, non-negative potential function $V$  which can vanish at infinity, that is, $V(x)\to 0$ when $|x| \to \infty$; for other problems involving this kind of potential, see Alves, Assunção \& Miyagaki~\cite{alves2015existence} and Alves \& Assunção~\cite{alves2022existence}.

\subsection{Palais-Smale and Cerami conditions}

Let $B$ be a Banach space such that $J \colon B \to \mathbb{R}$ is a $C^1$ functional defined on $B$ and $(u_n)$ is a sequence in $B$. The Palais-Smale condition $(PS)_c$ at level $c$ means that if the sequence $\{u_n\}_{n\in\mathbb{N}}$ is such that $J(u_n)\to c$ and $J'(u_n) \to 0$ as $n\to+\infty$, then $\{u_n\}$ has a convergent subsequence. The Cerami condition $(C)_c$ at level $c$ means that if the sequence$\{u_n\}_{n\in\mathbb{N}}$ is such that $J(u_n)\to c$ and $(1+\|u_n\|)J'(u_n) \to 0$ as $n\to+\infty$, then $c\in \mathbb{R}$ is a critical value of $J$. With the above conditions it is possible to show that if a sequence $\{u_n\}_{n\in\mathbb{N}}$ verifies the Palais-Smale condition $(PS)_c$, then it also verifies the Cerami condition $(C)_{c}$; for more details see Costa~\cite{costa2023remarks}. That the Cerami condition $(C)_{c}$ does not imply the Palais-Smale condition $(PS)_c$ can be seen by the function $z \colon \mathbb{R}^{2}\to\mathbb{R}$ defined by $z(x,y)=\ln\big(1+x^{2}\big)-\ln\big(1+y^{2}\big)$; this function verifies the Cerami condition $(C)_{0}$ but does not verify the Palais-Smale condition $(PS)_{0}$, for the level set $z^{-1}(0)$ is $|x|=|y|$; see Robinson~\cite{robinson1995multiple}. A Cerami sequence can produce a critical point even when a $(PS)$ sequence does not. 
A condition similar to $(C)_c$ was introduced by Cerami and was applied to the search for critic points of a functional on an unbounded Riemannian manifold. It should be mentioned that this weakening of the Palais-Smale condition seems essential in the study of variational problems in the strong resonance case because in general the Palais-Smale condition is not satisfied. For more information on these kinds of compactness conditions see the following comments about the nonlinearities.

\subsection{Some types of frequently used nonlinearities}

Several interesting questions arise when we consider the nonlinearities that appear in the study of partial differential equations. For example, inspired by Harrabi~\cite{harrabi2014palais}, consider the general prototype equation $-\Delta u=f(x,u)$ where $x\in\Omega$ with $\Omega\subset\mathbb{R}^{N}$ a bounded, open subset. We look for weak solutions in the Sobolev space $H_{0}^{1}(\Omega)\coloneqq \left\{w\in H^{1}(\Omega)\colon w=0 \text{ on } \Omega\right\}$. As usual, a weak solution to this problem is any function $u\in H_{0}^{1}(\Omega)$ such that $\langle u, v \rangle_{H_{0}^{1}(\Omega)} = \int_{\Omega} f(x,u)v\dd{x}$ for every function $v\in H_{0}^{1}(\Omega)$; here, the inner product is defined by $\langle u, v \rangle_{H_{0}^{1}(\Omega)}\coloneqq \int_{\Omega} uv \dd{x}$. It is well known that a function $u\in H_{0}^{1}(\Omega)$ is a weak solution to this problem if, and only if, it is a critical point of the Euler-Lagrange energy functional defined by $J(u)=(1/2)\|u\|_{H_{0}^{1}(\Omega)}^{2}-\int_{\Omega} F(x,u)\dd{x}$ where $F(x,s)=\int_{0}^{s}f(x,t)\dd{t}$. 

One can ask whether the differential equations have any nontrivial solutions; one can also ask whether it is possible to give a lower bound to the number of solutions by using some topological facts related to the nonlinearity $f\colon \Omega\times \mathbb{R}\to\mathbb{R}$ (eg. \xspace if it is odd in the first variable). For example, see Amann \& Zehnder~\cite{amann1980nontrivial} or Castro \& Lazer~\cite{castro1979critical}.

For general operators, the blow up argument can be used to get existence of positive solution when the nonlinearity $f$ has an assymptotical behavior like $f(s)=|s|^{q-2}s$ at infinity with $1<q<2^{*}=2N/(N-2)$. 

Most results use some hypotheses on the nonlinearity $f$ to make variational methods work. For example, $f \in C(\Omega \times \mathbb{R};\mathbb{R})$ satisfying the large subcritical growth condition,
\begin{enumerate}[label=($h\sb{\arabic*}$), 
ref=$h\sb{\arabic*}$]
\item \label{item:harrabi:h} there exist $C_{0}\in\mathbb{R}_{+}$ and $s_{0}\in\mathbb{R}_{+}$ such that $|f(x,s)|\leqslant C|s|^{2^{*}-1}$ for every $|s| \geqslant s_{0}$ and for every $x\in\Omega$.
\end{enumerate}

Under hypothesis~\eqref{item:harrabi:h} the energy  functional is well defined in the Sobolev space $H_{0}^{1}(\Omega)$ and belongs to $C^{1}(H_{0}^{1}(\Omega);\mathbb{R})$. If we impose some more additional conditions on the nonlinearity $f$, for example if $f(x,s)=a(x)g(s)$ where $a\in C^{0,\alpha}(\overline{\Omega})$ and 
$g\in C^{0,\alpha}_{\operatorname{loc}}(\mathbb{R})$, then it is possible to prove that any weak solution to the problem belongs to the space $C^{2}(\overline{\Omega})$.
 
The use of critical point theory needs a compactness condition, usually the Palais-Smale $(PS)$ condition or the Cerami $(C)$ condition. Our main goal in this brief survey is to revise the required standard assumptions used to get one these conditions. See Cl\'{e}ment, Figueiredo \& Mitidieri~\cite{clement} or Ramos \& Rodrigues~\cite{ramos2001fourth}.

In great part of the literature the $(PS)$ condition is proved by using standard assumptions. Mainly, the Ambrosetti-Rabinowitz condition, the $(AR)$ condition in short, which supposes the existence of $\xi\in\mathbb{R}$ with $\xi > 2$ and $s_{0}\in\mathbb{R}_{+}$ such that $sf(x,s)\geqslant \xi F(x,s)$ for $|s|>s_{0}$ and $x\in \Omega$.

Another typical hypothesis is the subcritical polynomial growth condition,
\begin{enumerate}[label=($h\sb{\arabic*}$), 
ref=$h\sb{\arabic*}$,resume]
\item \label{item:harrabi:scp} there exist $C\in\mathbb{R}_{+}$ and $q\in\mathbb{R}$ such that $1\leqslant q < 2^{*}$ such that $|f(x,s)|\leqslant C (|s|^{q-1}+1)$ for every $x\in\Omega$ and for every $s\in\mathbb{R}$. 
\end{enumerate}
Under the $(AR)$ condition and hypothesis~\eqref{item:harrabi:scp}, the Euler-Lagrange energy functional associated to the differential equation verifies the $(PS)$ condition. 

\subsection{The Ambrosetti-Rabinowitz condition revisited}
The major difficulty in the use of the $(PS)$ condition consists often in proving the boundedness of any Palais-Smale sequence $\{ u_{n} \}_{n\in\mathbb{N}}\subset H_{0}^{1}(\Omega)$. In contrast, for the $(AR)$ condition one has $(\xi/2-1)\| u \|^{2} \leqslant C_{0}(\| u \| + 1)$. However, the $(AR)$ condition is too restrictive and one requires instead the strong superlinear condition,
\begin{enumerate}[label=($h\sb{\arabic*}$), 
ref=$h\sb{\arabic*}$,resume]
\item \label{item:harrabi:3} there exists $C\in\mathbb{R}_{+}$ and $q\in\mathbb{R}$ with $q=\xi-1>0$ such that $|f(x,s)| \geqslant C(|s|^{q}-1)$ for every $x\in\Omega$ and for every $s\in\mathbb{R}$.
\end{enumerate}
In the particular case of the Sobolev space $H_{0}^{1}(\Omega)$, many new existence results have been obtained when $(AR)$ is relaxed by condition~\eqref{item:harrabi:3}. Therefore, some mild oscillations of the nonlinearity $f$ can be allowed. See de Figueiredo \& Yang~\cite{de2003semilinear} and Jeanjean~\cite{jeanjean1999existence}.

However, condition~\eqref{item:harrabi:3} is also violated by many nonlinearities, as for example, $f(s)=as$ or $f(s)=as\ln(s)$ at infinity with $a\in\mathbb{R}_{+}$.
Some special attention has been given to the value $\xi=2$ to introduce weaker condition than $(AR)$ and no longer require the strong superlinear condition. For example, 
\begin{enumerate}[label=($h\sb{\arabic*}$), 
ref=$h\sb{\arabic*}$,resume]
\item \label{item:harrabi:h1} there exist $c\in\mathbb{R}_{+}$ and $s_{0}\in\mathbb{R}_{+}$ such that $c|f(x,s)|^{2N/(N+2m)} \leqslant sf(x,s)-2F(x,s)$ for every $|s| > s_{0}$ and for every $x\in\Omega$.
\end{enumerate}
The key ingredient in this approach is the Riesz-Fréchet representation theorem, which permits one to write $J'(u_{n})$ as a variational equation by supposing the existence of $v_{n} \in H_{0}^{1}(\Omega)$ such that 
$J'(u_{n})\phi = \langle v_{n}, \phi \rangle_{H_{0}^{1}(\Omega)}$ for every $\phi \in H_{0}^{1}(\Omega)$ and $|J'(u_{n})|_{(H_{0}^{1}(\Omega))'}=|v_{n}|_{H_{0}^{1}(\Omega)}$. Thus, $u_{n}-v_{n}$ could be seen as a weak solution in $H_{0}^{1}(\Omega)$ of the equation $\langle u_{n}-v_{n}, \phi \rangle_{H_{0}^{1}(\Omega)} = \int_{\Omega} f(x,u_{n}(x)) \phi \dd{x}$ for every $\phi \in H_{0}^{1}(\Omega)$. Another new aspect in this argument is the use of the Lesbesgue space theory to show the boundedness of the sequence $\{u_n\}_{{n\in \mathbb{N}}} \subset H_{0}^{1}(\Omega)$. To accomplish this, a regularity result due to Agmon, Douglis \& Nirenberg~\cite{agmon1959estimates} can be useful.

Notice that from condition~\eqref{item:harrabi:h}, $C_{0}|f(x,s)|^{2^{*}}\leqslant |sf(x,s)|$; and from the $(AR)$ condition, $0<(1-2/\xi)sf(x,s) \leqslant s f(x,s) - 2F(x,s)$ for every $|s|>s_{0}$ and for every $x\in\Omega$. Hence, condition~\eqref{item:harrabi:h1} is weaker than the $(AR)$ condition. On the other hand, if $F(x,\pm s_{0})>0$, then condition~\eqref{item:harrabi:h1} implies condition~\eqref{item:harrabi:h}.
 
From condition~\eqref{item:harrabi:h1} it follows that 
$\int_{\Omega} |f(x,u_{n})|^{2^{*}}=O(\|u\|+1)$ for Palais-Smale sequences. So, it seems that condition~\eqref{item:harrabi:h1} is an optimal condition ensuring the boundedness of the sequence.

The function $f_{\alpha}(s)=s [g(|s|)]^{\alpha}$ where $g(s)=\ln(\ln|s|)$ verifies condition~\eqref{item:harrabi:h1} for every $\alpha \in\mathbb{R}_{+}$; however, it does not verify the strong superlinear condition~\eqref{item:harrabi:3}. Moreover, $f(s)=as$ does not verify condition~\eqref{item:harrabi:h1}; however, $f(s)=as-|s|^{\alpha - 1}s$ with $(2^{*}-1)^{-1}\leqslant \alpha < 1$ and $f(s)=as+s[\ln(|s|+2)]^{-\alpha'}$ with $\alpha'\in\mathbb{R}_{+}$ verify condition~\eqref{item:harrabi:h1} but not condition~\eqref{item:harrabi:3}.

\subsection{Subcritical polynomial growth condition revisited}
Upon verifying the boundedness of the $(PS)$ sequences, 
the use of the compactness of the embedding $H_{0}^{1}(\Omega) \hookrightarrow L^{q}(\Omega)$ together with condition~\eqref{item:harrabi:scp} allows one to prove that if the sequence $\{u_{n}\}_{n\in\mathbb{N}}\subset H_{0}^{1}(\Omega)$ is bounded, then $f(x,u_{n})$ has a convergent subsequence in $L^{2N/(N+2)}(\Omega)$. This means that the operator $K(u)v=\int_{\Omega} f(x,u)v\dd{x}$ is compact. But this condition is not satisfied when the nonlinearity is very close to critical growth, as in the example $f_{\alpha}(s)=|s|^{4/(N-2)}s/\ln^{\alpha}(|s|+2)$ for $\alpha\in\mathbb{R}_{+}$. However, the operator $K$ is compact for $f_{\alpha}$, which means that condition~\eqref{item:harrabi:scp} is only a sufficient condition. One way to weaken this condition is to substitute it with the condition
\begin{enumerate}[label=($h\sb{\arabic*}$), 
ref=$h\sb{\arabic*}$,resume]
\item \label{item:harrabi:h2} $\lim_{s\to+\infty} f(x,s)/|s|^{2^{*}-1}=0$ uniformly with respect to $x\in\Omega$.
\end{enumerate}
The operator $K$ is still compact under this assumption. Moreover, the condition~\eqref{item:harrabi:h2} seems to be nearly optimal because if $f(s)=|s|^{4/(N-2)}s$ at infinity, then $K$ is no longer compact.

Since many existence results are based on the fact that the $(PS)$ condition is satisfied, most cases require the $(AR)$ condition as well as the subcritical polynomial growth. Thus, after verifying $(PS)$ condition under hypotheses~\eqref{item:harrabi:h1} and~\eqref{item:harrabi:h2}, it is possible to improve some classical existence results having the minimax structure. For example, let $\lambda_{1}$ be the lowest eigenvalue of the self-adjoint $(-\Delta)u=f(x,u)$ problem with Dirichlet condition. Then the energy functional has a nontrivial critical point by the mountain pass theorem if $\lim_{s\to+\infty}f(x,s)>\lambda_{1}$ uniformly in $\overline{\Omega}$ and $\lim_{s\to 0} f(x,s)/s < \lambda_{1}$ uniformly in $\overline{\Omega}$.

Consider the function 
$f(s)=
 |s|^{4/(N-2)}s/\ln^{q+q'}(|s|)
+[|s|^{4/(N-2)}s/\ln^{q}(|s|)][\gamma+\sin(\ln(|s|))]$, defined for $|s|>1$, where $q\in\mathbb{R}_{+}$ and $0 < q' < 1$. It can be shown that there exists a constant $\underline{\gamma}\in\mathbb{R}$ such that for $\gamma > \underline{\gamma}$, then $f$ verifies the $(AR)$ condition; and if $\gamma < \underline{\gamma}$, then $f$ does not verify neither the $(AR)$ nor the \eqref{item:harrabi:h1} conditions. However, if $\gamma = \underline{\gamma}$ and if $q'< \min \{ 1, q (N-2)/(N+2) \}$, then $f$ does not verify neither the $(AR)$ nor the \eqref{item:harrabi:scp} conditions but verifies both conditions~\eqref{item:harrabi:h1} and~\eqref{item:harrabi:h2}. This shows some improvements brought by these conditions.

\subsection{Subcritical polynomial growth and Cerami conditions}
Consider again our prototype equation $-\Delta u = f(x,u)$ in the bounded, open domain $\Omega\subset\mathbb{R}^{N}$ with a variational structure; the energy functional $J\colon H_{0}^{1}(\Omega)\to\mathbb{R}$ associated to this problem can be defined by $J(u)=(1/2)\|u\|^{2}-\int f(x,u(x))\dd{x}$. 

If the Cerami condition is verified and since $\|J'(u_{n})\|_{(H_{0}^{1}(\Omega))'} \| u_{n}\| \to 0$ as $n\to+\infty$, then every Cerami sequence satisfies $2J(u_{n})-J'(u_{n})u_{n}=O(1)$, contrarily to the Palais-Smale sequences, where one only has $2J(u_{n})-J'(u_{n})u_{n}=O(\|u_{n}\|+1)$.
For instance, in case $f(s)=a(x)s+b|s|^{\alpha-1}s$ or $f(s)=a(x)s+b\log(|s|+1)$ where $a\in C(\overline{\Omega; \mathbb{R}})$ is a continuous, positive function with $b$, $\alpha\in\mathbb{R}$ and $0<\alpha<1$, the energy functional verifies the Cerami condition.

The use of the Cerami condition with the sequence $\{u_{n}\}_{n\in\mathbb{N}}\subset H_{0}^{1}(\Omega)$ usually goes as follows. A common assumption used by some authors is that $H(x,s)=2F(x,s)-sf(x,s) \geqslant -w_{1}(x)$ for $x\in\Omega$ and $t\in\mathbb{R}$, where $w_{1}\in L^{1}(\Omega)$ and that $H(x,s) \to +\infty$ a.e.\xspace as $|s|\to+\infty$; it is possible to prove that $\|u_{n}\|^{2}-\langle f(\cdot,u_{n}),u_{n}\rangle \to 0$ as $n\to +\infty$ and this implies that $\int_{\Omega}H(x,u_{n})\dd{x}\leqslant K$ for some constant $K\in\mathbb{R}$ . Then, towards a contradiction, it is assumed that the sequence $\{\|u_{n}\|\}_{n\in\mathbb{N}}\subset \mathbb{R}$ is unbounded, i.e., \xspace $\| u_{n} \| \to +\infty$ as $n \to +\infty$. Another sequence is now created by defining $\Tilde{u}_{n}(x)\coloneqq u_{n}(x)/\|u_{n}\|$; therefore, $\| \Tilde{u}_n \| = 1$ and, up to passage to a subsequence, we have $\Tilde{u}_n \rightharpoonup \Tilde{u}$ weakly in $H_{0}^{1}(\Omega)$, $\Tilde{u}_n \to \Tilde{u}$ strongly in $L^{2}(\Omega)$, and $\Tilde{u}_n \to \Tilde{u}$ a.e.\xspace in $\Omega$; moreover, it can be showed that $\Tilde{u}\not\equiv 0$. If we denote $\Omega_{0}\coloneqq \{ x\in\Omega\colon \Tilde{u}(x)\neq 0 \}$ and $\Omega_{1}\coloneqq \Omega \backslash \Omega_{0}$, then $|u_{n}(x)|=\|u_{n}\| \Tilde{u_{n}}(x) \to +\infty$ as $n\to +\infty$ for every $x\in \Omega_{0}$ and 
\begin{align*}
    \int_{\Omega_{0}\cup \Omega_{1}} H(x,u_{n}(x))\dd{x} \geqslant \int_{\Omega_{0}} H(x,u_{n}(x))\dd{x}-\int_{\Omega_{1}}w_{1}(x)\dd{x} \to +\infty.
\end{align*}
But this contradicts the boundedness of the lefthand side integral previously mentioned. Therefore, the Cerami sequence $\{u_{n}\}_{n\in\mathbb{N}}\subset H_{0}^{1}(\Omega)$ must be bounded and we obtain some compactness to work with in the proof of the existence result.

The crucial element in this argument is the estimate $\|u_{n}\|^{2}-\langle f(\cdot,u_{n}),u_{n}\rangle \to 0$ as $n\to +\infty$. If we had been dealing with a Palais-Smale sequence all the time, we could only conclude that $\|u_{n}\|^{2}-\langle f(\cdot,u_{n}),u_{n}\rangle = o(\| u_{n} \|)$ which would only imply that $\int_{\Omega} H(x,u_{n})\dd{x}=o(\| u_{n} \|)$. This would not contradict the estimate $\int_{\Omega} H(x,u_{n}(x))\dd{x} \to +\infty$ as $n\to+\infty$ and the argument would not go through. For more details, see Schechter~\cite{Schechter}.

\subsection{Degraded oscillation case}
Consider the nonlinearity with a very sparsed oscilation, for example, $f(s)=\gamma s^{p}+s^{p}(1+\sin(\log(s+2)))$ if $s\geqslant 0 $ and $f(s)=0$ if $s<0$. Then $f$ satisfies the subcritical polynomial growth condition at infinity for every $\gamma\geqslant 0$. Moreover, there exists $\underline{\gamma} \in\mathbb{R}_{+}$ such that if $\gamma > \underline{\gamma}$ then $f$ verifies the $(AR)$ condition and if $0<\gamma\leqslant \underline{\gamma}$ then $f$ verifies only the strong superlinear condition. However, if $\gamma=0$ then $f$ does not even verify the condition $\lim_{s\to+\infty} f(s)/s=+\infty$ uniformly with respect to $x \in \Omega$ since $f(\exp(\exp((2n-1/2)\pi-2)))=0$. This case is referred to as the degraded oscillation case. Under the strong superlinear condition together using the assumption $\lim_{s\to+\infty} [sf'(s)-pf(s)]/s^{p}=0$ and some additional conditions it is possible to prove the existence of at least one positive solution. For example, $f(s)=s^{p}(1+\sin(\log(\log(s+2))))$ is an instance for this situation. However the last condition is too restrictive; for example, the function $f(s)=\gamma s^{p}+s^{p}\sin(\log(s+2))$ for $s\geqslant0$ with $\gamma >1$ does not verify it.

\subsection{The case of resonant nonlinearities.}
Other kind of question is related to resonant nonlinearities. More precisely, consider an asymptotically linear function, that is, $\lim_{|t|\to+\infty} t^{-1}f(t)=\alpha$ where $\alpha\in\mathbb{R}$ is finite; then we can write $f(t)=\alpha t - g(t)$ with $\lim_{|t|\to_\infty}t^{-1}g(t)=0$ where $g\colon\mathbb{R}\to\mathbb{R}$. As usual, we denote by $\lambda_{1} < \lambda_{2} < \cdots < \lambda_{k} < \cdots$ the sequence of eigenvalues of the self-adjoint realization in $L^{2}(\Omega)$ of the Laplacian operator with Dirichlet boundary condition. We say that the problem is resonant at infinity if $\alpha = \lambda_{k}$ for some positive integer $k\in\mathbb{N}$. Depending on the growth of the function $g$ at infinity we have different degrees of resonance; that is, the smaller the $g$, the stronger the resonance. To quantify these degrees of resonance, we can consider some situations: 
\begin{enumerate}[label=($l\sb{\arabic*}$), 
ref=$l\sb{\arabic*}$]
\item $\lim_{t\to+\infty}g(t)=\ell_{+}\neq 0$ and $\lim_{t\to-\infty}g(t)=\ell_{-}\neq 0$: this weak resonance was first considered by Landesman \& Lazer~\cite{landesman1970nonlinear};

\item $\lim_{|t|\to+\infty}g(t)=0$ and 
$\lim_{t\to+\infty} \int_{0}^{t}g(s)\dd{s}=\pm\infty$: this mild resonance was first considered by Ahmad, Lazer \& Paul~\cite{ahmad1976elementary}; 

\item $\lim_{|t|\to+\infty}g(t)=0$ and 
$\lim_{|t|\to+\infty} \int_{0}^{t}g(s)\dd{s}=\beta$ where $\beta\in\mathbb{R}$ is finite: this strong resonance at infinity was considered by Thews~\cite{thews1980non};

\item $\lim_{|t|\to+\infty}tg(t)=0$, 
$\lim_{t\to+\infty} \int_{-\infty}^{t}g(s)\dd{s}=0$, this integral being well-defined and non-negative for every $t\in\mathbb{R}$: this strong resonance at infinity was considered by Bartolo, Benci \& Fortunato~\cite{bartolo1983abstract}. 
\end{enumerate}

In general terms, the existence results mentioned are proved through the application of deformation lemmas whose proofs, in turn, rely on a weakened version of the well-known Palais-Smale condition introduced by Cerami.

To conclude, we mention that when the variational approach can not be employed, the question of existence of solutions may be dealt with via topological methods. In this case, the proof of existence of solutions is essentially reduced to deriving {\itshape a priori} estimates for all possible solutions and in general needs that the domain $\Omega\subset\mathbb{R}^{N}$ be convex or a ball. However, certain behavior of the nonlinearity at infinity is still necessary.

For these and several other existence results above mentioned, see the interesting paper by Harrabi~\cite{harrabi2014palais}.

\subsection{Our contribution to the problem}
Motivated by the above papers, our results improve upon previous work in the following ways: we focus our attention on the existence of a nontrivial weak solution for fractional $p$-Kirchhoff equation in the entire space $\mathbb{R}^{N}$, which causes a difficulty due to
lack of compactness for Sobolev theorem;
 moreover, the problem also includes a nonlocal Choquard subcritical term and a critical Hardy-type term; additionally, we consider  singularities in the fractional $p$-Laplacian with 
$\theta = \theta_1 + \theta_2 $ not necessarily zero and we also add a critical Sobolev nonlinearity. The possibility of a slower growth in the nonlinearity makes it more difficult to establish a compactness condition. In fact, we will not prove the usual Palais-Smale condition, but rather a less restrictive version often credited to Cerami. Our argument has two crucial points: the first one is to prove a uniform boundedness of the convolution part $|I_{\mu}\ast F|<+\infty$, which gives a lot of help when we choose Cerami sequences; the second one to treat the lack of compactness of the Sobolev embeddings.

\section{The variational setting}
Here we recall a generalization of the Hardy-Littlewood-Sobolev, also called the doubly weighted inequality or the Stein-Weiss inequality. It provides quantitative information to characterize the integrability for the integral operators present in the energy functional and is crucial in the analysis developed in this work. See
Stein \& Weiss~\cite{MR0098285}; Lieb~\cite{MR717827} and Lieb \& Loss~\cite[Theorem~4.3]{MR1817225}.

\begin{proposition}[Doubly weighted Stein-Weiss inequality]
\label{prop2.1:chen}
Let $1 < r, \; t < +\infty$,
$0 < \mu < N$, 
and 
$\eta + \kappa \geqslant 0$
such that
$\mu + \eta + \kappa \leqslant N$,
$\eta < N/r'$,
$\kappa < N/t'$
and 
$1/t+(\mu + \eta + \kappa)/N+1/r=2$;
let $f \in L^t(\mathbb{R}^N)$ and 
$h \in L^r(\mathbb{R}^N)$. Then there exists a constant $C(N, \mu, r, t, \eta, \kappa)$, independent on $f$ and $h$, such that
\begin{align}
    \label{des:steinweiss}
\left|      \iint_{\mathbb{R}^{2N}} 
\frac{{f(x)}h(y)}{|x|^{\eta}|x-y|^\mu|y|^{\kappa}}\dd x \dd y\right| 
& \leqslant C(N, \mu, r, t, \eta, \kappa) \|f\|_{L^t(\mathbb{R}^N)} \|h\|_{L^r(\mathbb{R}^N)}.
\end{align}
\end{proposition}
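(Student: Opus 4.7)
The plan is to reduce the weighted bilinear inequality to the classical (unweighted) Hardy--Littlewood--Sobolev inequality via a dyadic decomposition of $\mathbb{R}^{N}$ in both variables, with the power weights handled by local H\"older estimates on each annulus.

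First, by duality the bilinear estimate is equivalent to the boundedness of the weighted Riesz-type operator
\begin{align*}
T h(x) \coloneqq \frac{1}{|x|^{\eta}}\int_{\mathbb{R}^{N}}\frac{h(y)}{|x-y|^{\mu}\,|y|^{\kappa}}\,dy
\end{align*}
from $L^{r}(\mathbb{R}^{N})$ to $L^{t'}(\mathbb{R}^{N})$, where $1/t+1/t'=1$. The scaling identity $1/t+(\mu+\eta+\kappa)/N+1/r=2$ translates into $1/t' - 1/r = -(\mu+\eta+\kappa)/N$, which is exactly the dilation balance forced by $x\mapsto\lambda x$ in $\mathbb{R}^{N}$, so this is the only balance for which a scale-invariant strong-type inequality can hold.

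Next I would decompose $\mathbb{R}^{N}$ into the dyadic annuli $A_{k}\coloneqq\{x\in\mathbb{R}^{N}:\,2^{k-1}\leqslant|x|<2^{k}\}$ for $k\in\mathbb{Z}$, and expand
\begin{align*}
\iint_{\mathbb{R}^{2N}}\frac{f(x)h(y)}{|x|^{\eta}|x-y|^{\mu}|y|^{\kappa}}\,dx\,dy \;=\; \sum_{j,k\in\mathbb{Z}} I_{j,k},\qquad I_{j,k}\coloneqq\iint_{A_{j}\times A_{k}}\frac{f(x)h(y)}{|x|^{\eta}|x-y|^{\mu}|y|^{\kappa}}\,dx\,dy,
\end{align*}
and split the double sum into a near-diagonal part ($|j-k|\leqslant 2$) and an off-diagonal part ($|j-k|\geqslant 3$). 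For the near-diagonal part, the three quantities $|x|$, $|y|$, $|x-y|$ are all comparable to $2^{k}$; after pulling out the weight factor $2^{-k(\eta+\kappa)}$ and rescaling $x\mapsto 2^{k}x$, matters reduce to the unweighted Hardy--Littlewood--Sobolev inequality on a fixed pair of annuli, and the scaling identity makes the resulting bound $I_{k,k}\lesssim\|f\chi_{A_{k}}\|_{L^{t}}\|h\chi_{A_{k}}\|_{L^{r}}$ uniform in $k$. For the off-diagonal part, $|x-y|\sim 2^{\max(j,k)}$, so the kernel factorizes into powers of $2^{j}$, $2^{k}$, and $2^{\max(j,k)}$; a direct H\"older estimate on each annulus bounds $I_{j,k}$ by a geometric factor in $|j-k|$ times $\|f\chi_{A_{j}}\|_{L^{t}}\|h\chi_{A_{k}}\|_{L^{r}}$, and the hypotheses $\eta<N/r'$, $\kappa<N/t'$ together with the scaling identity ensure that the geometric series in $|j-k|$ is summable.

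Finally, combining the near- and off-diagonal contributions and applying H\"older on the index sums yields the desired bound by $\|f\|_{L^{t}}\|h\|_{L^{r}}$; here the condition $\mu+\eta+\kappa\leqslant N$ is equivalent to $1/t+1/r\geqslant 1$, which provides the sequence-space embedding $\ell^{r}\hookrightarrow\ell^{t'}$ needed to close the argument. The main obstacle is the sharp bookkeeping of the parameter thresholds: each of the conditions $\eta<N/r'$, $\kappa<N/t'$, $\eta+\kappa\geqslant 0$ and $\mu+\eta+\kappa\leqslant N$ corresponds precisely to a convergence condition for one of the local H\"older estimates, the geometric series, or the final index summation, and at the endpoints the strong-type inequality actually breaks down and one must resort to finer tools such as Lorentz-space refinements or weak-type estimates combined with Marcinkiewicz interpolation, which is the route taken in the original Stein--Weiss paper.
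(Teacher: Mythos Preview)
The paper does not prove this proposition; it simply states the inequality and refers the reader to Stein \& Weiss, Lieb, and Lieb \& Loss for the proof. So there is no ``paper's own proof'' to compare against---the authors treat the Stein--Weiss inequality as a black-box tool.

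Your dyadic-annuli sketch is a legitimate and fairly standard route to the result, different from the interpolation argument in the original Stein--Weiss paper and from Lieb's rearrangement approach. One small slip: in the near-diagonal block you write that ``the three quantities $|x|$, $|y|$, $|x-y|$ are all comparable to $2^{k}$,'' but of course $|x-y|$ can be arbitrarily small when $x,y$ lie in neighbouring annuli; what you actually use (and correctly state next) is that after extracting the weight factor $2^{-k(\eta+\kappa)}$ you are left with the unweighted kernel $|x-y|^{-\mu}$, to which HLS applies directly. The rest of the bookkeeping---the geometric decay in $|j-k|$ off the diagonal driven by $\eta<N/r'$ and $\kappa<N/t'$, and the $\ell^{r}\hookrightarrow\ell^{t'}$ embedding from $1/t+1/r\geqslant 1$ to close the diagonal sum---is accurate in outline, though in a full write-up you would need to track the two off-diagonal regimes $j\gg k$ and $k\gg j$ separately and check that each pair of exponent conditions matches the correct side.
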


\begin{corollary}
\label{teo2.1:chen}
Let $0 < s < 1$;
$ 0 \leqslant \alpha < sp + \theta < N$;
$0 < \mu < N$;
given a function
$ u \in \Dot{W}^{s,p}_{\theta}(\mathbb{R}^N)$ 
consider Proposition~\ref{prop2.1:chen} with 
$\eta=\kappa=\delta$; 
$2\delta + \mu \leqslant N$ and 
$t=r=N/(N - \delta -\mu/2)$.
Then  
$f, h \in L^{\frac{N}{N-\delta-\mu/2}}(\mathbb{R}^N)$ and
\begin{align}
    \label{des:2.6chineses:bisss}
        \iint_{\mathbb{R}^{2N}} \frac{|f(x)||h(y)|}{|x|^{\delta}|x-y|^{\mu}|y|^{\delta}}\dd x\dd y
    & \leqslant C(N,\delta,\theta,\mu)
    \|f\|_{L^t(\mathbb{R}^N)} \|h\|_{L^t(\mathbb{R}^N)}.
\end{align}
\end{corollary}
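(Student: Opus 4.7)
The plan is to realize this corollary as a direct specialization of the doubly weighted Stein-Weiss inequality (Proposition~\ref{prop2.1:chen}) with the parameter choice $\eta = \kappa = \delta$ and $t = r = N/(N - \delta - \mu/2)$, so the entire proof reduces to an arithmetic check that each hypothesis of Proposition~\ref{prop2.1:chen} is satisfied, followed by a direct invocation of the inequality applied to $|f|$ and $|h|$.

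First, I would verify the admissibility of the exponent $t = r = N/(N-\delta-\mu/2)$. Since $\delta \geqslant 0$ and $\mu > 0$, one has $N - \delta - \mu/2 < N$, so $t > 1$; the standing hypothesis $2\delta + \mu \leqslant N$ forces $N - \delta - \mu/2 \geqslant N/2 > 0$, so $t < +\infty$. The sign hypothesis $\eta + \kappa = 2\delta \geqslant 0$ is clear and $\mu + \eta + \kappa = \mu + 2\delta \leqslant N$ is exactly the assumption $2\delta + \mu \leqslant N$. The conjugate exponent is $t' = N/(\delta + \mu/2)$, so $N/t' = \delta + \mu/2$; therefore $\eta = \delta < \delta + \mu/2 = N/t'$ precisely because $\mu > 0$, and analogously $\kappa < N/r'$.

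Next, I would verify the balance identity, which is the one relation that pins down why $t$ and $r$ must be taken equal to $N/(N-\delta-\mu/2)$:
\begin{align*}
\frac{1}{t} + \frac{\mu + \eta + \kappa}{N} + \frac{1}{r}
&= \frac{2(N-\delta-\mu/2)}{N} + \frac{\mu + 2\delta}{N}
= \frac{2N - 2\delta - \mu + \mu + 2\delta}{N} = 2.
\end{align*}
With all hypotheses of Proposition~\ref{prop2.1:chen} verified, I would apply that inequality to the non-negative functions $|f|, |h| \in L^{N/(N-\delta-\mu/2)}(\mathbb{R}^N)$, absorbing the dependence of the constant on the (now fixed) parameters $r, t$ into a single constant $C(N, \delta, \theta, \mu)$, which yields~\eqref{des:2.6chineses:bisss}.

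I do not expect any serious obstacle: the statement is essentially an accounting exercise verifying that the prescribed parameter choice lies inside the admissible range of Proposition~\ref{prop2.1:chen}. The role of $u \in \dot{W}^{s,p}_{\theta}(\mathbb{R}^N)$ in the hypothesis is not used in the inequality itself, but is there to signal the intended application: in practice $f$ and $h$ will be built from $u$ (for instance of the form $F_{\delta,\theta,\mu}(u)$), and their membership in $L^{N/(N-\delta-\mu/2)}(\mathbb{R}^N)$ is supplied by the Sobolev-type embedding~\eqref{sobem} combined with the growth bound~\eqref{hip:F2} on $F$, whose exponents $p_s^{\flat}(\delta,\mu)$ and $p_s^{\sharp}(\delta,\theta,\mu)$ were chosen precisely so that $|F(u)|$ lies in this Lebesgue space.
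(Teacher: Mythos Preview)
Your proposal is correct and matches the paper's approach: the corollary is stated there without proof, as it is an immediate specialization of Proposition~\ref{prop2.1:chen} under the indicated parameter choices, which is exactly the arithmetic verification you carry out. Your additional remark that the hypothesis $u\in\dot W^{s,p}_\theta(\mathbb{R}^N)$ plays no role in the inequality itself but only signals the intended application is also in line with the paper's subsequent comment on the well-definedness of the map $u\mapsto\iint |u(x)|^q|u(y)|^q/(|x|^\delta|x-y|^\mu|y|^\delta)\dd x\dd y$.
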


In general, for $\eta = \kappa = \delta$ and $t = r$, the map
\begin{align*}
    u & \mapsto 
    \iint_{\mathbb{R}^{2N}}
    \dfrac{|u(x)|^{q}|u(y)|^{q}}{|x|^{\delta}|x-y|^{\mu}|y|^{\delta}} \dd{x} \dd{y}
\end{align*}
is well defined by inequalities~\eqref{def:psus}.

The variational structure of 
problem~\eqref{problema:Kirchhof} can be established with the help of several results. To ensure the well-definiteness of the energy functional, we use the following result.

\begin{lemma}\label{lema2.2:chen}
Let \eqref{hip:V} and \eqref{hip:m1} hold. Then the functional $\Phi$ defined in~\eqref{kirchhoff:funcionalI} is of class $C^1(W^{s,p}_{V,\theta}(\mathbb{R}^N),\mathbb{R})$ and
\begin{align*}
\langle \Phi'(u),\varphi\rangle&=m(\|u\|_W^p)\Big[\iint_{\mathbb{R}^{2N}}\frac{|u(x)-u(y)|^{p-2}(u(x)-u(y))(\varphi(x)-\varphi(y))}{|x|^{\theta_1}|x-y|^{N+ps}|y|^{\theta_2}}\dd xdy\\
&\qquad\qquad+\int_{\mathbb{R}^N}V(x)\frac{|u(x)|^{p-2}u(x)\varphi(x)}{|x|^{\alpha}}\dd x\Big],
\end{align*}
for all $u,\varphi\in W^{s,p}_{V, \theta}(\mathbb{R}^N)$. Moreover, $\Phi$ is weakly lower semi-continuous in $W^{s,p}_{V, \theta}(\mathbb{R}^N)$.
\end{lemma}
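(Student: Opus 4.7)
The plan is to exploit the composition structure $\Phi(u)=\tfrac{1}{p}(M\circ\mathcal{N})(u)$, where $\mathcal{N}(u)\coloneqq\|u\|_W^{p}=[u]^{p}_{W^{s,p}_{\theta}}+\|u\|^{p}_{L^{p}_{V}(\mathbb{R}^{N};|x|^{-\alpha})}$, and then apply the chain rule. First I would record that $M\in C^{1}([0,\infty),\mathbb{R})$ with $M'(t)=m(t)$, which follows from the fundamental theorem of calculus together with the continuity of $m$ guaranteed by \eqref{hip:m1}. Next I would show that $\mathcal{N}$ is of class $C^{1}$ on $W^{s,p}_{V,\theta}(\mathbb{R}^{N})$, by treating each of its two summands separately.

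For the Gauss seminorm part $\mathcal{N}_{1}(u)=[u]^{p}_{W^{s,p}_{\theta}}$ I would compute the Gateaux derivative by differentiating under the integral sign. The pointwise derivative at $t=0$ of $t\mapsto |u(x)-u(y)+t(\varphi(x)-\varphi(y))|^{p}$ is $p|u(x)-u(y)|^{p-2}(u(x)-u(y))(\varphi(x)-\varphi(y))$, and to pass the limit of the difference quotient inside the integral I would use the elementary inequality $\bigl||a+tb|^{p}-|a|^{p}\bigr|\leqslant C|t|(|a|^{p-1}+|b|^{p-1})|b|$ (for $|t|\leqslant 1$), together with a Hölder argument over the weighted measure $|x|^{-\theta_{1}}|x-y|^{-(N+sp)}|y|^{-\theta_{2}}\dd x\dd y$. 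The dominated convergence theorem then yields Gateaux differentiability with the expected nonlocal expression. Continuity of $\mathcal{N}_{1}'$ from $W^{s,p}_{V,\theta}$ into its dual is obtained by a standard Brezis--Lieb / Vitali type argument: if $u_{n}\to u$ in $W^{s,p}_{V,\theta}$, the almost everywhere convergence of the integrands combined with a uniform equi-integrability bound gives $\mathcal{N}_{1}'(u_{n})\to\mathcal{N}_{1}'(u)$ in the dual. The local part $\mathcal{N}_{2}(u)=\int_{\mathbb{R}^{N}} V(x)|u|^{p}|x|^{-\alpha}\dd x$ is handled analogously, using \eqref{hip:V} to keep $V$ locally bounded and the continuous embedding~\eqref{sobem} to secure the integrability of the dominating terms.

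Once $\mathcal{N}\in C^{1}$ is established, the chain rule yields
\begin{align*}
\langle \Phi'(u),\varphi\rangle=\tfrac{1}{p}M'(\mathcal{N}(u))\langle \mathcal{N}'(u),\varphi\rangle=\tfrac{1}{p}m(\|u\|_{W}^{p})\cdot p\,\bigl[\cdots\bigr],
\end{align*}
which after cancellation is exactly the formula stated in the lemma, and continuity of $\Phi'$ follows from continuity of $m$ and of $\mathcal{N}'$ together with the boundedness of $\mathcal{N}$ on bounded sets. For weak lower semi-continuity, I would argue that $u\mapsto\|u\|_{W}$ is weakly lower semi-continuous because it is the norm of a reflexive Banach space; hence so is $u\mapsto\|u\|_{W}^{p}$. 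Since by \eqref{hip:m1} we have $m\geqslant m_{0}>0$, the primitive $M$ is strictly increasing and continuous on $[0,\infty)$, so composition with a nondecreasing continuous function preserves lower semi-continuity and $\Phi=\tfrac{1}{p}M(\|\cdot\|_{W}^{p})$ is weakly lower semi-continuous.

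The main obstacle is the technical verification that the difference quotients defining $\mathcal{N}_{1}'(u)\varphi$ admit an integrable dominant over the singular kernel $|x|^{-\theta_{1}}|x-y|^{-(N+sp)}|y|^{-\theta_{2}}$: because the weights $|x|^{-\theta_{i}}$ are singular at the origin, one must be careful in applying Hölder's inequality so that the bounding integrals still fit into the norm $\|\cdot\|_{W}$, rather than blowing up. This is where the precise hypothesis $\theta=\theta_{1}+\theta_{2}$ with $\theta_{i}$ compatible with the embedding~\eqref{sobem} and the condition $0<\alpha<N-\mu$ enters, and it is the only point where some real care (as opposed to routine bookkeeping) is required.
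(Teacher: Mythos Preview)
Your approach is correct and essentially matches the paper's: both establish the $C^{1}$ property by combining the continuity of $m$ with a Brezis--Lieb/dominated convergence argument showing that the maps $u\mapsto |u(x)-u(y)|^{p-2}(u(x)-u(y))/\bigl(|x|^{\theta_{1}}|x-y|^{N+sp}|y|^{\theta_{2}}\bigr)^{1/p'}$ and $u\mapsto V^{1/p'}|u|^{p-2}u/|x|^{\alpha/p'}$ are continuous into $L^{p'}$, and both obtain weak lower semi-continuity from the weak l.s.c.\ of $\|\cdot\|_{W}^{p}$ together with $M$ being continuous and nondecreasing. Your chain-rule framing makes the origin of the derivative formula more explicit than the paper does; your remark that the condition $0<\alpha<N-\mu$ enters in the domination step is a slip (that hypothesis pertains to the Choquard term, not to $\Phi$), but this does not affect the argument.
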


\begin{proof}
    Let $\{u_n\}_n \subset W$ and $u \in  W$ satisfy $u_n \to u$ strongly in $W$ as $n \to \infty$.
Without loss of generality, we assume that $u_n \to u$ a.e. in $\mathbb{R}^N$. Then the sequence
\begin{align}
    \Bigl\{\dfrac{|u_n(x)-u_n(y)|^{p-2}(u_n(x)-u_n(y))}{|x|^{\theta_1/p'}|x-y|^{(N+sp)/p'}|y|^{\theta_2/p'}}\Bigr\}_n \quad \text{is bounded in $L^{p'}(\mathbb{R}^{2N})$},
\end{align}
as well as in $\mathbb{R}^{2N}$
\begin{align*}
   U_n(x,y) &\coloneqq \dfrac{|u_n(x)-u_n(y)|^{p-2}(u_n(x)-u_n(y))}{|x|^{\theta_1/p'}|x-y|^{(N+sp)/p'}|y|^{\theta_2/p'}} \\
   & \to  \dfrac{|u(x)-u(y)|^{p-2}(u(x)-u(y))}{|x|^{\theta_1/p'}|x-y|^{(N+sp)/p'}|y|^{\theta_2/p'}} \coloneqq U(x,y).
\end{align*}
Thus, the Brezis–Lieb lemma implies
\begin{align}
\label{eq0}
     &\lim\limits_{n \to \infty}  \iint _{\mathbb{R}^{2N}} |U_n(x,y)-U(x,y)|^{p'} \dd x \dd y \nonumber\\ 
      & \quad = \lim\limits_{n \to \infty}  \iint _{\mathbb{R}^{2N}} (|U_n(x,y)|^{p'} - |U(x,y)|^{p'}) \dd x \dd y \nonumber\\
     & \quad =  \lim\limits_{n \to \infty}  \iint _{\mathbb{R}^{2N}} \Bigl(\frac{|u_n(x)-u_n(y)|^p}{|x|^{\theta_1}|x-y|^{N+sp}|y|^{\theta_2}} - \frac{|u(x)-u(y)|^p}{|x|^{\theta_1}|x-y|^{N+sp}|y|^{\theta_2}} \Bigr) \dd x\dd y.
\end{align}
The fact that $u_n \to u$ strongly in W yields that
\begin{align}
\label{eq1}
    \lim\limits_{n \to \infty}  \iint _{\mathbb{R}^{2N}} \Bigl(\frac{|u_n(x)-u_n(y)|^p}{|x|^{\theta_1}|x-y|^{N+sp}|y|^{\theta_2}} - \frac{|u(x)-u(y)|^p}{|x|^{\theta_1}|x-y|^{N+sp}|y|^{\theta_2}} \Bigr) \dd x\dd y = 0.
\end{align}
Moreover, the continuity of $m$ implies that
\begin{align}
    \label{eq2}
    \lim\limits_{n \to \infty} m ([u_n]^p_{W^{s,p}_{ \theta}(\mathbb{R}^N)}) = m ([u]^p_{W^{s,p}_{\theta}(\mathbb{R}^N)}).
\end{align}
From~\eqref{eq0} it follows that
\begin{align}
\label{eq22}
    \lim\limits_{n \to \infty}  \iint _{\mathbb{R}^{2N}} |U_n(x,y)-U(x,y)|^{p'} \dd x \dd y = 0.
\end{align}
Similarly, the sequence 
\begin{align}
    \Bigl\{\dfrac{V^{1/p'}(x)|u_n(x)|^{p-2}u_n(x)}{|x|^{\alpha/p'}}\Bigr\}_n \quad \text{is bounded in $L^{p'}(\mathbb{R}^{2N})$},
\end{align}
as well as in $\mathbb{R}^{2N}$
\begin{align*}
   K_n(x,y) &\coloneqq \dfrac{V^{1/p'}(x)|u_n(x)|^{p-2}u_n(x)}{|x|^{\alpha/p'}} 
   \to  \dfrac{V^{1/p'}(x)|u(x)|^{p-2}u(x)}{|x|^{\alpha/p'}} \coloneqq K(x,y).
\end{align*}
Thus, the Brezis–Lieb lemma implies
\begin{align}
\label{eq01}
     \lim\limits_{n \to \infty} \int _{\mathbb{R}^{N}} |K_n(x,y)-K(x,y)|^{p'} \dd x & = \lim\limits_{n \to \infty}  \int _{\mathbb{R}^{N}} (|K_n(x,y)|^{p'} - |K(x,y)|^{p'}) \dd x \nonumber \\ 
     & =  \lim\limits_{n \to \infty}  \int _{\mathbb{R}^{N}} \Bigl(\frac{V(x)|u_n(x)|^p}{|x|^{\alpha}} - \frac{V(x)|u(x)|^p}{|x|^{\alpha}} \Bigr) \dd x.
\end{align}
The fact that $u_n \to u$ strongly in W yields that
\begin{align}
\label{eq11}
    \int _{\mathbb{R}^{N}} \Bigl(\frac{V(x)|u_n(x)|^p}{|x|^{\alpha}} - \frac{V(x)|u(x)|^p}{|x|^{\alpha}} \Bigr) \dd x = 0.
\end{align}
From~\eqref{eq01} it follows that
\begin{align}
\label{eq222}
    \lim\limits_{n \to \infty} \int  _{\mathbb{R}^{N}} |K_n(x,y)-K(x,y)|^{p'} \dd x  = 0.
\end{align}
From Hölder inequality, we have
\begin{align*}
& \iint_{\mathbb{R}^{2N}} \frac{|u_n(x)-u_n(y)|^{p-2}(u_n(x)-u_n(y))(\phi (x) - \phi(y))}{|x|^{\theta_1}|x+y|^{N+sp}|y|^{\theta_2}} \dd x \dd y \\
& \quad + \int _{\mathbb{R}^N} \frac{V(x)|u_n(x)|^{p-2}u_n(x)\phi (x)}{|x|^{\alpha}} \dd x \\
   & = \iint_{\mathbb{R}^{2N}} \frac{|u_n(x)-u_n(y)|^{p-2}(u_n(x)-u_n(y))}{|x|^{\frac{\theta_1}{p'}}|x+y|^{\frac{N+sp}{p'}}|y|^{\frac{\theta_2}{p'}}} \cdot \frac{\phi (x)-\phi(y)}{|x|^{\frac{\theta_1}{p}}|x+y|^{\frac{N+sp}{p}}|y|^{\frac{\theta_2}{p}}} \dd x \dd y \\
   & \quad + \int_{\mathbb{R}^N} \frac{(V(x))^{\frac{1}{p'}}|u_n(x)|^{p-2}u_n(x)}{|x|^{\frac{\alpha}{p'}}}\cdot \frac{(V(x))^{\frac{1}{p}}\phi(x)}{|x|^{\frac{\alpha}{p}}} \dd x\\
& \leqslant \Bigl(\iint_{\mathbb{R}^{2N}} \frac{|u_n(x)-u_n(y)|^{p}}{|x|^{\theta_1}|x+y|^{N+sp}|y|^{\theta_2}} \dd x \dd y\Bigr)^{\frac{1}{p'}} \cdot \Bigl(\iint_{\mathbb{R}^{2N}}\frac{|\phi (x)-\phi(y)|^p}{|x|^{\theta_1}|x+y|^{N+sp}|y|^{\theta_2}} \dd x \dd y\Bigr)^{\frac{1}{p}} \\
 & \quad + \Bigl (\frac{V(x)|u_n|^p}{|x|^{\alpha}} \dd x \Bigr)^{\frac{1}{p'}} \cdot \Bigl (\frac{V(x)|\phi |^p}{|x|^{\alpha}} \dd x \Bigr)^{\frac{1}{p}}.
\end{align*}
Similarly, we can obtain the previous result for $u$.

Combining~\eqref{eq2},~\eqref{eq22} and~\eqref{eq222} with the Hölder inequality, we have
\begin{align*}
    \|\Phi '(u_n) - \Phi '(u)\|_{W'} = \sup\limits_{\phi \in W, \; \|\phi \|_W=1}|\langle\Phi'(u_n)-\Phi'(u), \phi\rangle| \to 0
\end{align*}
as $n \to + \infty$. Hence $\Phi \in C^1(W, \mathbb{R})$. Finally, that the map $v 	\mapsto [v]^p_{W^{s,p}_{\theta}(\mathbb{R}^N)}$ is lower semi-continuous in the weak topology of $W^{s,p}_{V, \theta}(\mathbb{R}^N)$ and $M$ is nondecreasing and continuous on $\mathbb{R}_0^+$, so the $u 	\mapsto M([u]^p_{W^{s,p}_{\theta}(\mathbb{R}^N)})$ is lower semi-continuous in the weak topology of $W^{s,p}_{V, \theta}(\mathbb{R}^N)$. 
\end{proof}

One of the main difficulties of this work is to prove the weak strong continuity of the term involving the weighted Sobolev critical exponent. To accomplish this goal, we use the following result.

\begin{lemma}
    \label{sobolev}
    The functional $\Xi$ defined in~\eqref{kirchhoff:funcionalI} as well as $\Xi '$ are weakly strongly continuous on $W_{V, \theta}^{s,p}(\mathbb{R}^N)$. 
\end{lemma}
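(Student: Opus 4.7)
The plan is to show that whenever $u_n \rightharpoonup u$ weakly in $W \coloneqq W_{V,\theta}^{s,p}(\mathbb{R}^N)$, one has both $\Xi(u_n) \to \Xi(u)$ in $\mathbb{R}$ and $\Xi'(u_n) \to \Xi'(u)$ in the operator norm of $W^{*}$. I would begin with the boundedness $\sup_n \|u_n\|_W < +\infty$ provided by weak convergence; combined with the continuous embedding \eqref{sobem} taken at $\nu = p_s^{*}(\beta,\theta)$, this yields a uniform $L^{1}$-bound on the sequence $\{|u_n|^{p_s^{*}(\beta,\theta)}/|x|^{\beta}\}_n$. A fractional Rellich--Kondrachov argument on each annulus $B_R(0)\setminus B_{1/R}(0)$, where the singular weight $|x|^{-\beta}$ is regular, together with a diagonal subsequence extraction, gives $u_n \to u$ a.e.\ in $\mathbb{R}^N$.

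The heart of the matter is the convergence $\int_{\mathbb{R}^N}|u_n|^{p_s^{*}(\beta,\theta)}/|x|^{\beta}\,\dd x \to \int_{\mathbb{R}^N}|u|^{p_s^{*}(\beta,\theta)}/|x|^{\beta}\,\dd x$. I would invoke the Brezis--Lieb lemma with respect to the measure $|x|^{-\beta}\dd x$, which from the pointwise a.e.\ convergence and the uniform $L^{1}$-bound produces
\begin{equation*}
\int_{\mathbb{R}^N}\frac{|u_n|^{p_s^{*}(\beta,\theta)}}{|x|^{\beta}}\dd x - \int_{\mathbb{R}^N}\frac{|u_n-u|^{p_s^{*}(\beta,\theta)}}{|x|^{\beta}}\dd x - \int_{\mathbb{R}^N}\frac{|u|^{p_s^{*}(\beta,\theta)}}{|x|^{\beta}}\dd x \longrightarrow 0,
\end{equation*}
so it suffices to show that the middle term tends to zero. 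For this I would split $\mathbb{R}^N = B_{\rho}(0) \cup (B_R(0)\setminus B_{\rho}(0)) \cup (\mathbb{R}^N\setminus B_R(0))$ with $0 < \rho < R$: on the annular region the local strong convergence does the job, whereas on the singular piece and on the far field I would combine H\"older with \eqref{sobem} and the absolute continuity of $|x|^{-\beta}\dd x$ against shrinking and far-away sets, making those contributions arbitrarily small uniformly in $n$ upon choosing $\rho$ small and $R$ large.

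The treatment of $\Xi'$ is parallel. Writing $\langle \Xi'(u),\varphi\rangle = \int |u|^{p_s^{*}(\beta,\theta)-2}u\,\varphi/|x|^{\beta}\,\dd x$ and distributing the weight as $|x|^{-\beta/(p_s^{*})'}\cdot |x|^{-\beta/p_s^{*}}$, I would check that $|u_n|^{p_s^{*}-2}u_n/|x|^{\beta/(p_s^{*})'}$ is bounded in $L^{(p_s^{*})'}(\mathbb{R}^N)$ and converges a.e.\ to its natural limit. Applying Brezis--Lieb in this dual setting and then taking the supremum over test functions $\varphi$ with $\|\varphi\|_W = 1$ (controlled via \eqref{sobem}), I conclude $\|\Xi'(u_n)-\Xi'(u)\|_{W^{*}}\to 0$.

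The main obstacle is exactly the compactness at the critical exponent: in principle, concentration at the origin or escape to infinity could obstruct strong convergence in $L^{p_s^{*}(\beta,\theta)}(\mathbb{R}^N,|x|^{-\beta})$. The crucial feature I would rely on is that the singular weight $|x|^{-\beta}$ with $\beta > 0$, combined with the H\"older--Sobolev tail argument above, is enough to rule out both pathologies and therefore upgrade weak convergence to strong convergence in this critical weighted space.
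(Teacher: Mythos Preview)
Your argument has a genuine gap at the decisive step. On the ball $B_\rho(0)$ you claim that $\int_{B_\rho}|u_n-u|^{p_s^{*}(\beta,\theta)}/|x|^{\beta}\,\dd x$ can be made small \emph{uniformly in $n$} by taking $\rho$ small, invoking ``the absolute continuity of $|x|^{-\beta}\dd x$ against shrinking sets''. Absolute continuity of the \emph{measure} is not the issue; what you would need is equi-integrability of the family $\{|u_n-u|^{p_s^{*}(\beta,\theta)}\}_n$ with respect to that measure, and this is exactly what fails at the critical exponent. The weighted Hardy--Sobolev inequality
\[
\Bigl(\int_{\mathbb{R}^N}\frac{|u|^{p_s^{*}(\beta,\theta)}}{|x|^{\beta}}\,\dd x\Bigr)^{p/p_s^{*}(\beta,\theta)}\leqslant C\,[u]_{W^{s,p}_\theta}^{p}
\]
is dilation-invariant: for $u_\lambda(x)=\lambda^{(N-sp-\theta)/p}\phi(\lambda x)$ with a fixed bump $\phi$, both the Gagliardo seminorm and the critical weighted integral are independent of $\lambda$, while the lower-order piece $\int V|u_\lambda|^p/|x|^{\alpha}\,\dd x$ scales like $\lambda^{\alpha-sp-\theta}\to 0$ (under the standing condition $\alpha<sp+\theta$). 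Hence $u_\lambda\rightharpoonup 0$ in $W$ with $\Xi(u_\lambda)$ bounded away from $0$, and all of the mass of $|u_\lambda|^{p_s^{*}(\beta,\theta)}/|x|^{\beta}$ sits in $B_\rho$ once $\lambda$ is large. Your splitting cannot exclude this concentration, and the same obstruction defeats the parallel Brezis--Lieb route for $\Xi'$. The ``H\"older with \eqref{sobem}'' estimate only gives a uniform \emph{bound} on $\int_{B_\rho}$, not decay in $\rho$.

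For comparison, the paper does not supply an argument of its own here at all: it simply refers to Lemma~1.7 of a companion work. Whatever is proved there must rely on additional structure not visible in the present statement---a further restriction among the parameters $\alpha,\beta,\theta$, a weaker form of continuity that still suffices for the application in Lemma~\ref{mage2}, or a concentration-compactness analysis restricted to Cerami levels below a threshold. Your proposed route cannot close without some such extra input; in particular, the assertion in your last paragraph that ``the singular weight $|x|^{-\beta}$ with $\beta>0$ \ldots\ is enough to rule out both pathologies'' is precisely where the proof breaks down.
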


\begin{proof}
    See 
    Assunção, Miyagaki \& Siqueira~\cite[Lemma $1.7$]{assunção2023fractional}.
\end{proof}

\begin{lemma}\label{lema:2.3chen}
Assume (\eqref{hip:F2}) holds, then there exists $K>0$ such that
\begin{align}\label{asd1}
\Bigl|\mathcal{I}_\mu\ast \frac{F(v)}{|x|^{\delta}}\Bigr|\leqslant K\quad \mbox{for}\ v\in W^{s,p}_{V, \theta}(\mathbb{R}^N).
\end{align}
\end{lemma}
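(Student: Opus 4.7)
The plan is to derive a pointwise bound on $(I_\mu \ast F(v)/|\cdot|^\delta)(z)$ that is uniform in $z \in \mathbb{R}^N$ (though allowed to depend on $\|v\|_W$), by decomposing the $y$-domain according to whether $y$ is close to or far from the target point $z$. The polynomial growth of $F$ from \eqref{hip:F2}, the local integrability of the singular Riesz kernel $|z-y|^{-\mu}$ on $B_1(z)$, and the weighted embedding \eqref{sobem} will then combine to produce the desired constant $K$.

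Concretely, integrating the bound on $f$ in \eqref{hip:F2} gives $|F(t)| \leqslant C(|t|^{q_1}+|t|^{q_2})$, so the problem reduces to estimating
\begin{equation*}
\int_{\mathbb{R}^N} \frac{|v(y)|^{q_i}}{|z-y|^\mu |y|^\delta}\,\dd{y} \qquad (i=1,2).
\end{equation*}
I would split each such integral as $\int_{|z-y|<1}+\int_{|z-y|\geqslant 1}$. On the exterior piece, one has $|z-y|^{-\mu}\leqslant 1$, so the estimate reduces to $\int |v|^{q_i}/|y|^\delta\,\dd{y}$, which I would control via Hölder's inequality by splitting the weight $|y|^{-\delta}$ between the two factors and applying \eqref{sobem} with an admissible exponent $\nu$; this yields a bound polynomial in $\|v\|_W$. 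For the interior piece, I would apply Hölder with conjugate exponents $(r,r')$ chosen so that $\mu r'<N$, which makes $\||z-\cdot|^{-\mu}\|_{L^{r'}(B_1(z))}$ a finite constant independent of $z$, and the remaining $L^r$-factor involving $v$ is again controlled by \eqref{sobem} after identifying the induced exponent/weight pair.

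The main obstacle is the bookkeeping of exponents: the choices in each Hölder split must simultaneously satisfy the local kernel-integrability condition $\mu r'<N$, the $y$-weight integrability at infinity, and the requirement that the induced exponent-weight pair for the $v$-factor lies in the admissible range $\nu\in[p,\,p(N-\beta)/(N-ps-\theta)]$ of the weighted Sobolev embedding \eqref{sobem}. The hypothesis $p_s^{\flat}(\delta,\mu)<q_1\leqslant q_2<p_s^{\sharp}(\delta,\theta,\mu)$ together with $2\delta+\mu<N$ was calibrated precisely so that such a selection is possible, but verifying the compatibility of all these arithmetic relations is the technical core of the argument.
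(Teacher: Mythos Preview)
Your proposal is correct and follows essentially the same route as the paper: bound $|F(t)|\leqslant C(|t|^{q_1}+|t|^{q_2})$ via \eqref{hip:F2}, split the convolution integral into $\{|z-y|<1\}$ and $\{|z-y|\geqslant 1\}$, use $|z-y|^{-\mu}\leqslant 1$ on the far piece, and apply H\"older with a conjugate pair $(r,r')$ satisfying $\mu r'<N$ on the near piece so that the kernel factor is a constant independent of $z$.

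The one place where the paper differs from your sketch is the handling of the far piece. Rather than ``splitting the weight $|y|^{-\delta}$ between two H\"older factors''---which is awkward because the residual weight $|y|^{-\gamma}$ is not in any $L^{r'}$ over a region that may contain the origin---the paper invokes hypothesis~\eqref{hip:V} to insert $V(y)/V_0\geqslant 1$ and thereby write
\[
\int_{|z-y|\geqslant 1}\frac{|v(y)|^{q_i}}{|y|^{\delta}}\,\dd y
\;\leqslant\;\frac{1}{V_0}\int_{\mathbb{R}^N}\frac{V(y)|v(y)|^{q_i}}{|y|^{\delta}}\,\dd y,
\]
which is already an $L^{q_i}_V$-norm and is controlled directly by \eqref{sobem}. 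This sidesteps the weight-splitting entirely. Your near-piece treatment and your identification of the exponent bookkeeping as the technical core match the paper exactly.
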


\begin{proof}
By the assumptions \eqref{hip:F2} and \eqref{hip:V} and using~\eqref{sobem}, we have
\begin{align*}
\Bigl|\mathcal{I}_\mu\ast \frac{F(v)}{|x|^{\delta}}\Bigr|&=\left|\int_{\mathbb{R}^N}\frac{F(v)}{|x|^{\delta}|x-y|^\mu}\dd y\right|\nonumber\\
\leqslant &\left|\int_{|x-y|\leqslant 1}\frac{F(v)}{|x|^{\delta}|x-y|^\mu}\dd y\right|+\left|\int_{|x-y|\geqslant 1}\frac{F(v)}{|x|^{\delta}|x-y|^\mu}\dd y\right|\nonumber\\
\leqslant&c_0\int_{|x-y|\leqslant 1}\frac{|v|^{q_1}+|v|^{q_2}}{|x|^{\delta}|x-y|^\mu}\dd y+c_0\int_{|x-y|\geqslant 1}\frac{|v|^{q_1}+|v|^{q_2}}{|x|^{\delta}} \dd y\nonumber\\
\leqslant &c_0\int_{|x-y|\leqslant 1}\frac{|v|^{q_1}+|v|^{q_2}}{|x|^{\delta}|x-y|^\mu}\dd y+\frac{c_0}{V_0}\int_{|x-y|\geqslant 1}\frac{V(y)|v|^{q_1}+V(y)|v|^{q_2}}{|x|^{\delta}} \dd y\nonumber\\
\leqslant&c_0\int_{|x-y|\leqslant 1}\frac{|v|^{q_1}+|v|^{q_2}}{|x|^{\delta}|x-y|^\mu}dy+C(\|v\|_W^{q_1}+\|v\|_W^{q_2}) \nonumber\\
\leqslant&c_0\int_{|x-y|\leqslant 1}\frac{|v|^{q_1}+|v|^{q_2}}{|x|^{\delta}|x-y|^\mu}\dd y+C.
\end{align*}

For the first term in the last line above, choosing $t_1\in \Big(\frac{N-ps-\theta}{N-\delta-\mu/2},\frac{p(N-\beta)}{(N-ps- \theta)q_1}\Big)$ and $t_2\in \Big(\frac{N-ps-\theta}{N-\delta - \mu/2},\frac{p(N-\beta)}{(N-ps-\theta)q_2}\Big)$, using H\"{o}lder inequality and the assumption \eqref{hip:V}, we obtain
\begin{align*}
&\lefteqn{\int_{|x-y|\leqslant 1}\frac{|v|^{q_1}+|v|^{q_2}}{|x|^{\delta}|x-y|^\mu}\dd y}\\
& \leqslant \left(\int_{|x-y|\leqslant 1} \frac{|v|^{q_1t_1}}{|x|^{\delta t_1}} \dd y\right)^{\frac{1}{t_1}}\left(\int_{|x-y|\leqslant 1} |x-y|^{-\frac{\mu t_1}{t_1-1}}\dd y\right)^{\frac{t_1-
1}{t_1}}\\
&\quad+\left(\int_{|x-y|\leqslant 1} \frac{|v|^{q_2t_2}}{|x|^{\delta t_2}} \dd y\right)^{\frac{1}{t_2}}\left(\int_{|x-y|\leqslant 1} |x-y|^{-\frac{\mu t_2}{t_2-1}}\dd y\right)^{\frac{t_2-
1}{t_2}}\\
&\leqslant\frac{1}{V_0^{1/t_1}}\left(\int_{|x-y|\leqslant 1} \frac{V(x)|v|^{q_1t_1}}{|x|^{\delta t_1}} \dd y\right)^{\frac{1}{t_1}}\left(\int_{|x-y|\leqslant 1} |x-y|^{-\frac{\mu t_1}{t_1-1}}\dd y\right)^{\frac{t_1-
1}{t_1}}\\
&\quad+\frac{1}{V_0^{1/t_2}}\left(\int_{|x-y|\leqslant 1} \frac{V(x)|v|^{q_2t_2}}{|x|^{\delta t_2}} \dd y\right)^{\frac{1}{t_2}}\left(\int_{|x-y|\leqslant 1} |x-y|^{-\frac{\mu t_2}{t_2-1}}\dd y\right)^{\frac{t_2-
1}{t_2}}
\end{align*}
Let $0< \alpha< N-\mu$, we choose $q_1t_1=p, q_2t_2 =p$ and $\delta t_1=\alpha, \delta t_2=\alpha$, so using~\eqref{sobem}, we have
\begin{align*}
&\lefteqn{\int_{|x-y|\leqslant 1}\frac{|v|^{q_1}+|v|^{q_2}}{|x|^{\delta}|x-y|^\mu}\dd y}\\
    &\leqslant  C\left(\|v\|_W^{q_1} +\|v\|_W^{q_2}\right) \left[\left(\int_{r\leqslant 1} r^{N-1-\frac{\mu t_1}{t_1-1}}\dd y\right)^{\frac{t_1-
1}{t_1}}+\left(\int_{r\leqslant 1} r^{N-1-\frac{\mu t_2}{t_2-1}}\dd y\right)^{\frac{t_2-
1}{t_2}}\right]\\
&\leqslant C.
\end{align*}
\end{proof}

\begin{lemma}\label{lema2a}
Let \eqref{hip:V} and \eqref{hip:F1}--\eqref{hip:F2} hold. Then the functional $\Psi$ defined in~\eqref{kirchhoff:funcionalI} as well as $\Psi'$ are weakly strongly continuous on $W^{s,p}_{V, \theta}(\mathbb{R}^N)$.
\end{lemma}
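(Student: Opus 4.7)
The plan is to take an arbitrary sequence $\{u_n\}_{n\in\mathbb{N}}\subset W=W^{s,p}_{V,\theta}(\mathbb{R}^N)$ with $u_n\rightharpoonup u$ weakly in $W$ and show both $\Psi(u_n)\to\Psi(u)$ and $\Psi'(u_n)\to\Psi'(u)$ in $W'$. Weak convergence in $W$ gives boundedness; by the standard fractional Rellich-type compact embedding on balls combined with a diagonal extraction, one may pass to a subsequence with $u_n\to u$ a.e.\ in $\mathbb{R}^N$, and \eqref{hip:F1} then yields $F(u_n)\to F(u)$ and $f(u_n)\to f(u)$ a.e. The central quantitative input is the uniform $L^t$ bound for $F(u_n)/|x|^\delta$ with $t=N/(N-\delta-\mu/2)$: using \eqref{hip:F2},
\begin{equation*}
\int_{\mathbb{R}^N}\left|\frac{F(u_n)}{|x|^\delta}\right|^t\dd x \leqslant C\int_{\mathbb{R}^N}\frac{|u_n|^{q_1 t}+|u_n|^{q_2 t}}{|x|^{\delta t}}\dd x,
\end{equation*}
and the definition \eqref{def:psus} of $p_s^\flat$ and $p_s^\sharp$ together with $0<\alpha<N-\mu$ ensures $q_i t\in[p,p_s^\ast(\beta,\theta)]$ and $\delta t$ falls in a range covered by the weighted Sobolev embedding \eqref{sobem}, yielding a bound by $C(\|u_n\|_W^{q_1 t}+\|u_n\|_W^{q_2 t})$.

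With this $L^t$ bound in hand, an application of Vitali's convergence theorem (equi-integrability on bounded sets plus tightness at infinity coming from the strict inequality $q_1>p_s^\flat$ and the strict inequality $q_2<p_s^\sharp$ for concentrations near the origin, exactly as in the proof of Lemma~\ref{sobolev}) upgrades the a.e.\ convergence to strong convergence $F(u_n)/|x|^\delta\to F(u)/|x|^\delta$ in $L^t(\mathbb{R}^N)$, and analogously $f(u_n)/|x|^\delta\to f(u)/|x|^\delta$ in the relevant Lebesgue space. For the continuity of $\Psi$ itself I would factor the difference as
\begin{equation*}
\Psi(u_n)-\Psi(u)=\frac{1}{2}\iint_{\mathbb{R}^{2N}}\frac{[F(u_n)-F(u)](x)\,[F(u_n)+F(u)](y)}{|x|^\delta|x-y|^\mu|y|^\delta}\dd x\dd y
\end{equation*}
and apply the doubly weighted Stein-Weiss inequality of Corollary~\ref{teo2.1:chen}, which together with the $L^t$ convergence forces $\Psi(u_n)\to\Psi(u)$.

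For the continuity of $\Psi'$ in $W'$, I would split the relevant integrand as
\begin{equation*}
\left[I_\mu\ast\frac{F(u_n)-F(u)}{|x|^\delta}\right]\frac{f(u_n)}{|x|^\delta}+\left[I_\mu\ast\frac{F(u)}{|x|^\delta}\right]\frac{f(u_n)-f(u)}{|x|^\delta},
\end{equation*}
test against $\phi\in W$ with $\|\phi\|_W=1$, and estimate both pieces separately. The first piece is controlled by applying Stein-Weiss and Hölder, so that the strong $L^t$ convergence of $F(u_n)/|x|^\delta$ absorbs it to zero uniformly in $\phi$. The second piece is handled by the uniform $L^\infty$ bound $|I_\mu\ast F(u)/|x|^\delta|\leqslant K$ provided by Lemma~\ref{lema:2.3chen}, reducing the question to a dominated-convergence argument for $\int (f(u_n)-f(u))\phi/|x|^\delta \dd x$ with the dominant furnished by \eqref{hip:F2} and \eqref{sobem}. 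Taking the supremum over $\phi$ yields $\|\Psi'(u_n)-\Psi'(u)\|_{W'}\to 0$.

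The hard part is precisely the passage from a.e.\ plus uniform $L^t$ boundedness to strong $L^t$ convergence for $F(u_n)/|x|^\delta$ on the unbounded domain $\mathbb{R}^N$ with a singular weight: one must simultaneously rule out mass concentration at the origin and mass leakage at infinity, and this is where the strict two-sided inclusion $p_s^\flat<q_1\leqslant q_2<p_s^\sharp$ in \eqref{hip:F2} must be exploited with a careful truncation argument. Once this step is executed, the Stein-Weiss inequality and Lemma~\ref{lema:2.3chen} make the remainder of the proof a routine assembly of Hölder and dominated convergence estimates.
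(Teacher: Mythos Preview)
Your approach is correct but genuinely different from the paper's. The paper does not pass through Stein--Weiss or Vitali at all: it first asserts the compact embedding $W^{s,p}_{V,\theta}(\mathbb{R}^N)\hookrightarrow\hookrightarrow L^{q_1}(\mathbb{R}^N,|x|^{-\delta})\cap L^{q_2}(\mathbb{R}^N,|x|^{-\delta})$, which via Brezis' theorem furnishes a dominating function $\ell$ with $|u_n|/|x|^{\delta}\leqslant \ell$ a.e. It then uses the uniform $L^\infty$ bound $\big|I_\mu\ast F(u_n)/|x|^{\delta}\big|\leqslant K$ of Lemma~\ref{lema:2.3chen} for \emph{both} $\Psi$ and $\Psi'$, so that the convolution drops out of every estimate and one is left with $\big|(I_\mu\ast F(u_n)/|x|^\delta)F(u_n)/|x|^\delta\big|\leqslant Kc_0\big(|u_n|^{q_1}/q_1+|u_n|^{q_2}/q_2\big)/|x|^{\delta}$, integrable by the domination $\ell$; dominated convergence then finishes both parts directly.

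Your route trades the assumed compact embedding for an explicit Vitali/truncation argument establishing strong $L^t$ convergence of $F(u_n)/|x|^\delta$, and then drives the convergence through Stein--Weiss rather than through the $L^\infty$ convolution bound. This is the more common strategy in the Choquard literature and is more self-contained (you essentially reprove the compact embedding the paper invokes), but it is longer. The paper's approach buys a markedly shorter proof by exploiting Lemma~\ref{lema:2.3chen} to decouple the convolution entirely; your approach buys transparency about where exactly the strict inequalities $p_s^\flat<q_1\leqslant q_2<p_s^\sharp$ enter. One caution on your handling of $\Psi'$: for the second piece, ``dominated convergence for $\int(f(u_n)-f(u))\phi/|x|^\delta\dd x$'' must be uniform in $\phi$, so what you actually need is strong convergence of $f(u_n)/|x|^\delta$ in the dual weighted Lebesgue space, not pointwise dominated convergence; you noted this earlier (``analogously $f(u_n)/|x|^\delta\to f(u)/|x|^\delta$ in the relevant Lebesgue space''), so just make sure the final step invokes that rather than Lebesgue dominated convergence per test function.
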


\begin{proof}
\label{lema:2.4chen}
Let $\{u_n\}$ be a sequence in $W^{s,p}_{V, \theta}(\mathbb{R}^N)$ such that $u_n\rightharpoonup u$ in $W^{s,p}_{V, \theta}(\mathbb{R}^N)$ as $n\to\infty$. Then $\{u_n\}$ is bounded in $W^{s,p}_{V, \theta}(\mathbb{R}^N)$, and then there exists a subsequence denoted by itself, such that
$$
u_n\to u\quad\mbox{in}\ L^{q_1}(\mathbb{R}^N, |x|^{-\delta})\cap L^{q_2}(\mathbb{R}^N, |x|^{-\delta}),\qquad \mbox{and}\ \ \
u_n\to u\quad\mbox{a.e. in}\ \mathbb{R}^N\ \ \mbox{as}\ n\to\infty,
$$
and by \cite[Theorem $4.9$]{brezis2011functional} there exists $\ell\in L^{q_1}(\mathbb{R}^N, |x|^{-\delta})\cap L^{q_2}(\mathbb{R}^N, |x|^{-\delta})$ such that
$$
\frac{|u_n(x)|}{|x|^{\delta}}\leqslant \ell(x)\ \mbox{a.e.\ in}\ \ \mathbb{R}^N.
$$
First, we show that $\Psi$ is weakly strongly continuous on $W^{s,p}_{V, \theta}(\mathbb{R}^N)$. Since $F\in C^1(\mathbb{R},\mathbb{R})$, we see that $\frac{F(u_n)}{|x|^{\delta}}\to \frac{F(u)}{|x|^{\delta}}$ as
$n\to\infty$ for almost all $x\in\mathbb{R}^N$, and so $\Bigl(\mathcal{I}_\mu\ast \frac{F(u_n)}{|x|^{\delta}}\Bigr)\frac{F(u_n)}{|x|^{\delta}}\to \Bigl(\mathcal{I}_\mu\ast \frac{F(u)}{|x|^{\delta}}\Bigr)\frac{F(u)}{|x|^{\delta}}$ as $n\to\infty$  for almost all $x\in\mathbb{R}^N$.
From Lemma \ref{lema:2.3chen} and \eqref{hip:F2}, we have
\begin{align*}
\Bigl|\Bigl(\mathcal{I}_\mu\ast \frac{F(u_n)}{|x|^{\delta}}\Bigr)\frac{F(u_n)}{|x|^{\delta}}\Bigr|
\leqslant Kc_0 \Bigl(\frac{|u_n(x)|^{q_1}}{q_1|x|^{\delta}}+\frac{|u_n(x)|^{q_2}}{q_2|x|^{\delta}}\Bigr)\in L^1(\mathbb{R}^N).
\end{align*}
By Lebesgue dominated convergence theorem, we get
\begin{align*}
\int_{\mathbb{R}^N}\Bigl(\mathcal{I}_\mu\ast \frac{F(u_n)}{|x|^{\delta}}\Bigr)\frac{F(u_n)}{|x|^{\delta}}\dd x\to \int_{\mathbb{R}^N}\Bigl(\mathcal{I}_\mu\ast \frac{F(u)}{|x|^{\delta}}\Bigr)\frac{F(u)}{|x|^{\delta}}\dd x\quad \mbox{as}\ n\to\infty,    
\end{align*}
which implies that $\Psi(u_n)\to\Psi(u)$ as $n\to\infty$. Thus $\Psi$ is weakly strongly continuous on $W^{s,p}_{V, \theta}(\mathbb{R}^N)$.

Next, we prove that $\Psi'$ is weakly strongly continuous on $W^{s,p}_{V, \theta}(\mathbb{R}^N)$. Since $u_n(x)\to u(x)$ as $n\to\infty$ for almost all $x\in\mathbb{R}^N$,
$\frac{f(u_n)}{|x|^{\delta}}\to \frac{f(u)}{|x|^{\delta}}$ for almost all $x\in\mathbb{R}^N$ as $n\to\infty$. Then
\begin{align*}
    \Bigl(\mathcal{I}_\mu\ast \frac{F(u_n)}{|x|^{\delta}}\Bigr)\frac{f(u_n)}{|x|^{\delta}} \to  \Bigl(\mathcal{I}_\mu\ast \frac{F(u)}{|x|^{\delta}}\Bigr)\frac{f(u)}{|x|^{\delta}}\quad \mbox{a.e.\ in}\ \mathbb{R}^N,\ \  \mbox{as}\ n\to\infty.
\end{align*}
By \eqref{hip:F2} and H\"{o}lder inequality, we have that for any $\varphi\in W^{s,p}_{V, \theta}(\mathbb{R}^N)$,
\begin{align*}
&\lefteqn{\int_{\mathbb{R}^N}\Bigl|\Bigl(\mathcal{I}_\mu\ast \frac{F(u_n)}{|x|^{\delta}}\Bigr)\frac{f(u_n)}{|x|^{\delta}} \varphi(x)\Bigr|\dd x}\\
&\leqslant c_0K\int_{\mathbb{R}^N}| \Bigl(\frac{|u_n|^{q_1-1}}{|x|^{\delta}}+\frac{|u_n|^{q_2-1}}{|x|^{\delta}}\Bigr) \varphi(x)|\dd x\\
&\leqslant c_0 K \Bigl[\Bigl(\int_{\mathbb{R}^N} \Bigl(\frac{|u_n|^{q_1-1}}{|x|^{\delta(q_1-1)/q_1}}\Bigr)^{\frac{q_1}{q_1-1}}\dd x\Bigr)^{\frac{q_1-1}{q_1}}\Bigl(\int_{\mathbb{R}^N}\Bigl(\frac{|\varphi(x)|}{|x|^{\delta/q_1}}\Bigr)^{q_1}\dd x\Bigr)^{\frac{1}{q_1}}\\
&\quad +\Bigl(\int_{\mathbb{R}^N} \Bigl(\frac{|u_n|^{q_2-1}}{|x|^{\delta(q_2-1)/q_2}}\Bigr)^{\frac{q_2}{q_2-1}}\dd x\Bigr)^{\frac{q_2-1}{q_2}}\Bigl(\int_{\mathbb{R}^N}\Bigl(\frac{|\varphi(x)|}{|x|^{\delta/q_2}}\Bigr)^{q_2}\dd x\Bigr)^{\frac{1}{q_2}}\Bigr]\\
&=c_0K  \Bigl(\|u_n\|_{{L^{q_1}(\mathbb{R}^N, |x|^{-\delta})}}^{q_1-1}\|\varphi\|_{L^{q_1}(\mathbb{R}^N, |x|^{-\delta})} + \|u_n\|_{{L^{q_2}(\mathbb{R}^N, |x|^{-\delta})}}^{q_2-1}\|\varphi\|_{{L^{q_2}(\mathbb{R}^N, |x|^{-\delta})}} \Bigr) \\
&\leqslant c_0K  \Big(C_{q_1}\|\ell(x)\|_{{L^{q_1}(\mathbb{R}^N, |x|^{-\delta})}}^{q_1-1}
+C_{q_2}\|\ell(x)\|_{{L^{q_2}(\mathbb{R}^N, |x|^{-\delta})}}^{q_2-1}\Big)\|\varphi\|_W.
\end{align*}
Then by Lebesgue dominated convergence theorem, we obtain
\begin{align*}
&\lefteqn{\|\Psi'(u_n)-\Psi'(u)\|_{\big(W_V^{s,p, \theta}(\mathbb{R}^N)\big)'}}\\
&=\sup\limits_{\|\varphi\|_{W_V^{s,p, \theta}(\mathbb{R}^N)}=1}|\langle \Psi'(u_n)-\Psi'(u), \varphi\rangle|\\
&=\sup\limits_{\|\varphi\|_{W_V^{s,p, \theta}(\mathbb{R}^N)}=1}\int_{\mathbb{R}^N}\Bigl|\Bigl(\mathcal{I}_\mu\ast \frac{F(u_n)}{|x|^{\delta}}\Bigr)\frac{f(u_n)}{|x|^{\delta}} \varphi(x)-\Bigl(\mathcal{I}_\mu\ast \frac{F(u)}{|x|^{\delta}}\Bigr)\frac{f(u)}{|x|^{\delta}} \varphi(x)\Bigr|\dd x\\
& \to 0\quad \mbox{as}\ n\to\infty.
\end{align*}
Therefore, we get that $\Psi'(u_n)\to\Psi'(u)$ in $\big(W_{V, \theta}^{s,p}(\mathbb{R}^N)\big)'$ as $n\to\infty$. This completes the proof.
\end{proof}

\section{The geometry of the mountain pass theorem}

In this section, we will prove our main result. First, we introduce the following definition.

\begin{definition}
\label{cerami}
For $c\in\mathbb{R}$, we say that $I$ satisfies the $(C)_c$ condition if for any sequence $\{u_n\}\subset W^{s,p}_{V, \theta}(\mathbb{R}^N)$ with
\begin{align*}
I(u_n)\to c,\quad \|I'(u_n)\|(1+\|u_n\|_W)\to 0,    
\end{align*}
there is a subsequence $\{u_n\}$ such that $\{u_n\}$ converges strongly in $W^{s,p}_{V, \theta}(\mathbb{R}^N)$.
\end{definition}

We will use the following  mountain pass theorem to prove  our result.
\begin{lemma}[Theorem 1 in \cite{costa1995nontrivial}]\label{cmainth}
Let $E$ be a real Banach space, $I\in C^1(E,\mathbb{R})$
satisfies the $(C)_c$ condition for any $c\in\mathbb{R}$, and

(i) There are constants $\rho,\alpha>0$ such that $I|_{\partial B_\rho}\geqslant\alpha$.

(ii) There is an $e\in E\backslash B_\rho$ such that $I(e)\leqslant 0$.\\
\noindent Then,
$$
c=\inf\limits_{\gamma\in\Gamma}\max\limits_{0\leqslant t\leqslant1}I(\gamma(t))\geqslant \alpha
$$
is a critical value of $I$, where
$$
\Gamma=\{\gamma\in C([0,1],E):\gamma(0)=0,\gamma(1)=e\}.
$$
\end{lemma}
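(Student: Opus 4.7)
The proof follows the classical Mountain Pass Theorem blueprint, adapted to the weaker Cerami compactness condition. The plan is to (a) verify that the minimax level $c$ provides a nontrivial bound $c \geqslant \alpha > 0$, (b) construct a Cerami sequence at level $c$ via Ekeland's variational principle, and (c) extract a critical point using the $(C)_c$ condition. For step (a), fix any $\gamma\in\Gamma$. Since $\gamma(0)=0$ and $\gamma(1)=e$ with $\|e\|>\rho$, the continuous function $t\mapsto\|\gamma(t)\|$ must attain the value $\rho$ at some $t_0\in(0,1)$ by the intermediate value theorem. Hypothesis (i) then gives $\max_{t\in[0,1]} I(\gamma(t))\geqslant I(\gamma(t_0))\geqslant\alpha$, and taking the infimum over $\Gamma$ yields $c\geqslant\alpha>0$.

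For step (b), equip $\Gamma$ with a suitable complete metric, for instance the distorted distance
\[
d(\gamma_1,\gamma_2) \coloneqq \max_{t\in[0,1]} \frac{\|\gamma_1(t)-\gamma_2(t)\|}{1+\|\gamma_1(t)\|+\|\gamma_2(t)\|},
\]
which is tailored to the Cerami weight. Define $\Phi\colon\Gamma\to\mathbb{R}$ by $\Phi(\gamma)\coloneqq\max_{t\in[0,1]} I(\gamma(t))$; this is lower semicontinuous and bounded below by $\alpha$. Apply Ekeland's variational principle to $\Phi$: for every $n\in\mathbb{N}$ one obtains a quasi-minimizing path $\gamma_n\in\Gamma$ with $c\leqslant\Phi(\gamma_n)\leqslant c+\tfrac{1}{n}$ and with the sharpened minimality property $\Phi(\gamma)\geqslant\Phi(\gamma_n)-\tfrac{1}{n}d(\gamma,\gamma_n)$ for all $\gamma\in\Gamma$. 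A variational analysis along the subset $\{t\in[0,1]\colon I(\gamma_n(t)) \text{ is near }\Phi(\gamma_n)\}$, using admissible perturbations $\gamma_n+\varepsilon\,\eta$ with $\eta$ supported on this subset, produces a point $u_n$ lying on the image of $\gamma_n$ such that $I(u_n)\to c$ and $(1+\|u_n\|)\|I'(u_n)\|\to 0$. This is precisely a Cerami sequence at level $c$.

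For step (c), the $(C)_c$ condition provides a strongly convergent subsequence $u_n\to u$ in $E$. By the $C^1$ regularity of $I$, passing to the limit gives $I(u)=c\geqslant\alpha>0$ and $I'(u)=0$, so $u$ is a critical point of $I$ at the minimax level $c$, as claimed. The main obstacle is step (b): a naïve application of Ekeland's principle with the standard uniform metric on $\Gamma$ only yields a Palais-Smale sequence $\|I'(u_n)\|\to 0$, without the weight $1+\|u_n\|$. Obtaining the Cerami sharpening requires the distorted metric above, so that the quantitative conclusion of Ekeland's principle translates into a bound involving $(1+\|u_n\|)\|I'(u_n)\|$ rather than just $\|I'(u_n)\|$. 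An equivalent route, avoiding Ekeland altogether, is to prove a Cerami-type quantitative deformation lemma via a locally Lipschitz pseudo-gradient vector field rescaled by the factor $1/(1+\|u\|)$; then argue by contradiction, showing that if $c$ were regular the deformation would push a nearly optimal path to a maximum strictly below $c$, contradicting the definition of $c$.
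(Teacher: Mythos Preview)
The paper does not prove this lemma: it is quoted verbatim as Theorem~1 of \cite{costa1995nontrivial} and used as a black box, so there is no in-paper proof to compare against. Your outline is a reasonable sketch of how such Cerami-type mountain pass results are established in the literature.

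That said, step~(b) as written is heuristic and would not survive as a complete proof without further work. The specific distorted distance you propose,
\[
d(\gamma_1,\gamma_2)=\max_{t\in[0,1]}\frac{\|\gamma_1(t)-\gamma_2(t)\|}{1+\|\gamma_1(t)\|+\|\gamma_2(t)\|},
\]
is not obviously a metric (the triangle inequality is unclear), and even if one fixes that, completeness of $(\Gamma,d)$ and the precise way in which the Ekeland conclusion transfers to a bound on $(1+\|u_n\|)\|I'(u_n)\|$ are nontrivial and would need to be spelled out. The standard rigorous route is the one you mention at the end: a quantitative deformation lemma built from a pseudo-gradient field rescaled by $(1+\|u\|)^{-1}$, as in Bartolo--Benci--Fortunato or Schechter, followed by the usual contradiction argument. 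If you want to keep the Ekeland approach, it is cleaner to work with the Finsler-type structure on $E$ given by $\|v\|_u\coloneqq \|v\|/(1+\|u\|)$ and the associated geodesic distance, for which completeness and the triangle inequality are automatic.
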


We first show that the energy functional $I$ satisfies the geometric structure.

\begin{lemma}\label{mage}
Assume that \eqref{hip:V}, \eqref{hip:m1}--\eqref{hip:m2} and \eqref{hip:F1}--\eqref{hip:F3} hold. Then

(i) There exists $\alpha,\rho>0$ such that $I(u)\geqslant\alpha$ for all $u\in W^{s,p}_{V, \theta}(\mathbb{R}^N)$ with $\|u\|_W=\rho$.

(ii) $I(u)$ is unbounded from below on $W^{s,p}_{V, \theta}(\mathbb{R}^N)$.
\end{lemma}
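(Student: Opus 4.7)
The plan is to follow the standard mountain pass geometry argument: for (i) bound $I$ below by a positive constant on the small sphere $\|u\|_W = \rho$; for (ii) fiber $I$ along a ray $t u_0$ and exploit the superlinear growth of the Choquard term to push $I(tu_0) \to -\infty$.

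For part (i), I will estimate each of the three summands of $I(u)$ separately on the set $\|u\|_W = \rho$ with $\rho$ small. The Kirchhoff term is handled by \eqref{hip:m1}: since $m(\tau) \geqslant m_{0}$, one has $M(\tau) \geqslant m_{0}\tau$, hence $\frac{1}{p}M(\|u\|_W^p) \geqslant \frac{m_0}{p}\|u\|_W^p$. The Sobolev critical term is dominated using the continuous embedding~\eqref{sobem} with $\nu = p^{\ast}_{s}(\beta,\theta)$, giving $\Xi(u) \leqslant C_{1}\|u\|_W^{p^{\ast}_s(\beta,\theta)}$. For the Choquard term I will apply the doubly weighted Stein-Weiss inequality (Corollary~\ref{teo2.1:chen}) with $\eta=\kappa=\delta$ and $t=r=N/(N-\delta-\mu/2)$, obtaining $\Psi(u) \leqslant C\|F(u)\|_{L^t(\mathbb{R}^N)}^{2}$. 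Then \eqref{hip:F2} gives the pointwise bound $|F(s)| \leqslant c_0(|s|^{q_1}/q_1+|s|^{q_2}/q_2)$, so that (after using $V \geqslant V_{0}$ and the embedding~\eqref{sobem} with the exponents $q_{i}t$, which by \eqref{def:psus} lie in $[p,p^{\ast}_{s}(\beta,\theta)]$) we get $\Psi(u) \leqslant C_{2}(\|u\|_{W}^{2q_{1}} + \|u\|_{W}^{2q_{2}})$. Combining, for $\|u\|_W = \rho$ we obtain
\begin{equation*}
I(u) \geqslant \frac{m_{0}}{p}\rho^{p} - C_{1}\rho^{p^{\ast}_{s}(\beta,\theta)} - \lambda C_{2}\bigl(\rho^{2q_{1}} + \rho^{2q_{2}}\bigr).
\end{equation*}
The key numerical observation is $p^{\ast}_{s}(\beta,\theta) > p$ (trivial) and $2q_{1} > p$: indeed $q_{1} > p^{\flat}_{s}(\delta,\mu) = (N-\delta-\mu/2)p/N$, so $2q_{1} > 2(N-\delta-\mu/2)p/N > p$ precisely because of the standing hypothesis $2\delta + \mu < N$. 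Thus every negative term is of higher order than $\rho^{p}$, and choosing $\rho$ small produces the required $\alpha > 0$.

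For part (ii), I will fix once and for all a test function $u_{0} \in C_{0}^{\infty}(\mathbb{R}^{N})$ supported in a compact set $K \subset \mathbb{R}^{N}\setminus\{0\}$ with $u_{0} \geqslant 0$ and $u_{0} \not\equiv 0$ (such a function belongs to $W^{s,p}_{V,\theta}(\mathbb{R}^N)$ since the singular weights are locally integrable off the origin), and study $I(tu_{0})$ as $t \to +\infty$. For the Kirchhoff part I will exploit \eqref{hip:m2}: the inequality $m(\tau)\tau \leqslant \xi M(\tau)$ is equivalent to $\tfrac{d}{d\tau}(M(\tau)/\tau^{\xi}) \leqslant 0$, so $M(\tau)/\tau^{\xi}$ is non-increasing. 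Therefore $M(t^{p}\|u_{0}\|_{W}^{p}) \leqslant M(\|u_{0}\|_{W}^{p})\, t^{p\xi}$ for $t \geqslant 1$, which yields $\Phi(tu_{0}) \leqslant C_{3}\, t^{p\xi}$. For the Choquard part I will use \eqref{hip:F3}: given any $A > 0$, there exists $R > 0$ such that $F(s) \geqslant A|s|^{p\xi}|x|^{\delta}$ (interpreted uniformly for $x$ in the compact set $K$) whenever $|s| \geqslant R$. For $t$ sufficiently large one has $tu_{0}(x) \geqslant R$ on a set $K' \subset K$ of positive measure, so
\begin{equation*}
\Psi(tu_{0}) \geqslant A^{2}\, t^{2p\xi} \iint_{K'\times K'} \frac{|u_{0}(x)|^{p\xi}|u_{0}(y)|^{p\xi}}{|x|^{\delta}|x-y|^{\mu}|y|^{\delta}}\dd x \dd y = A^{2}\, C_{4}\, t^{2p\xi},
\end{equation*}
where $C_{4} > 0$ is finite since $|x|^{\delta},|y|^{\delta}$ are bounded on $K'$ and the remaining Riesz-type integral over a compact set is finite. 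Dropping the non-positive Sobolev critical term we conclude
\begin{equation*}
I(tu_{0}) \leqslant C_{3}\, t^{p\xi} - \tfrac{\lambda}{2} A^{2} C_{4}\, t^{2p\xi}.
\end{equation*}
Since $2p\xi > p\xi$ and $A$ can be chosen as large as we wish, $I(tu_{0}) \to -\infty$, so there exists $e = t_{\ast}u_{0}$ with $\|e\|_{W} > \rho$ and $I(e) \leqslant 0$.

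The main delicate step is part~(ii): one must handle the somewhat unusual formulation of \eqref{hip:F3} (with the $|x|^{\delta}$ inside the quotient) in such a way that the Choquard integral admits a clean lower bound of order $t^{2p\xi}$; picking $u_{0}$ compactly supported away from the origin makes the $|x|^{\delta}$ factor harmless and turns the uniformity statement into a genuine pointwise inequality on the support. The other potential pitfall in~(i) is ensuring that $q_{i}t$ falls in the admissible range for the embedding~\eqref{sobem}; this follows directly from the definitions of $p^{\flat}_{s}(\delta,\mu)$ and $p^{\sharp}_{s}(\delta,\theta,\mu)$ in~\eqref{def:psus}.
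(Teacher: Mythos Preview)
Your argument is correct, but both halves differ from the paper's route.

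For part~(i) the paper does not invoke Stein--Weiss. Instead it applies Lemma~\ref{lema:2.3chen} (the pointwise bound $|I_\mu\ast F(u)/|x|^{\delta}|\leqslant K$) to collapse the double integral into a single one, leaving only $\int(|u|^{q_1}+|u|^{q_2})\dd x$ to estimate; this produces the exponents $q_1,q_2$ and the paper concludes by asserting $q_1>p$. Your Stein--Weiss approach instead yields the exponents $2q_1,2q_2$, and your justification $2q_1>2p_s^{\flat}(\delta,\mu)>p$ (from $2\delta+\mu<N$) is cleaner: it does not require the separate inequality $q_1>p$, which is \emph{not} a consequence of~\eqref{def:psus} alone since $p_s^{\flat}(\delta,\mu)<p$. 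One caveat: the embedding~\eqref{sobem} is stated only for the weighted target $L^{\nu}_{V}(\mathbb{R}^N,|x|^{-\alpha})$ with $\nu\leqslant p_s^{\ast}(\beta,\theta)$, whereas you need an unweighted $L^{q_it}$ bound with $q_it$ possibly as large as $Np/(N-sp-\theta)>p_s^{\ast}(\beta,\theta)$; this is a harmless gap (the unweighted fractional Sobolev embedding up to $p_s^{\ast}(0,\theta)$ is standard), but it is worth stating explicitly rather than citing~\eqref{sobem}.

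For part~(ii) the paper does not scale along a ray. It takes a \emph{spatial dilation} $v_t(x)=v_0(x/t)$, computes that the Kirchhoff part scales like $t^{(N-sp-\theta)\xi}$, the Sobolev part like $t^{N-\beta}$, and the Choquard part like $t^{2(N-\delta-\mu/2)}$, and then uses the numerical constraint $\xi<2p_s^{\flat}(\delta,\mu)/p$ from~\eqref{hip:m2} to conclude that the Choquard exponent dominates; hypothesis~\eqref{hip:F3} is invoked only minimally, to guarantee $F(t_0)\neq0$ for some $t_0$ and hence positivity of the Choquard integral. Your ray-scaling $t\mapsto tu_0$ is more elementary---no dilation bookkeeping---and exploits~\eqref{hip:F3} in full strength to get the $t^{2p\xi}$ lower bound directly. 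Both arguments are valid; the paper's buys independence from the precise growth rate in~\eqref{hip:F3}, while yours avoids the exponent comparison $(N-sp-\theta)\xi<2(N-\delta-\mu/2)$ altogether.
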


\begin{proof}
$(i)$ From Lemma \ref{lema:2.3chen} and \eqref{hip:m1}--\eqref{hip:m2}, \eqref{hip:F2}, we have
\begin{align*}
 I(u)
&=\frac{1}{p}M(\|u\|^p_{W})
-\dfrac{1}{p_{s}^{\ast}(\beta,\theta)}
\int_{\mathbb{R}^{N}} 
\dfrac{|u|^{p_{s}^{\ast}(\beta,\theta)}}{|x|^{\beta}} \dd{x}\\
& \quad
-\dfrac{\lambda}{2 }
\int_{\mathbb{R}^{N}}
\int_{\mathbb{R}^{N}}
\dfrac{F_{\delta, \theta, \mu}(u(x))F_{\delta, \theta, \mu}(u(y))}%
       {|x|^{\delta}
        |x-y|^{\mu}
        |y|^{\delta}}
        \dd{x}\dd{y}\\
& \geqslant \frac{1}{p\xi}m(\|u\|_W^p)\|u\|_W^p- \|u\|_{W}^{p_s^*(\beta, \theta)}- \frac{\lambda c_0K }{2}\int_{\mathbb{R}^{N}}\Big(\frac{|u|^{q_1}}{q_1}+\frac{|u|^{q_2}}{q_2}\Big)\,\dd x\\
& \geqslant \left[\frac{m_0}{p\xi} - \|u\|_W^{p_s^*(\beta, \theta)-p}- \frac{\lambda c_0K }{2} \Big(C_{q_1}^{q_1}\|u\|_W^{q_1-p}+C_{q_2}^{q_2}\|u\|_W^{q_2-p}\Big)\right]\|u\|_W^p.
\end{align*}
Since $q_2\geqslant q_1>p$ and $p_s^*(\beta, \theta)>p$, the claim follows if we choose $\rho$ small enough.

$(ii)$ Rewriting the inequality of \eqref{hip:m2} in the form of $m(t)/M(t) \leqslant \xi/t$, after integration, we deduce that there is a constant $C \in \mathbb{R}_+$ such that
\begin{align}\label{eq:3.1chen}
{M}(t)\leqslant Ct^\xi\quad \mbox{for\ all}\  t\geqslant 1.
\end{align}
By the assumption \eqref{hip:F3}, we can take that $t_0$ such that $F(t_0)\neq 0$, we find
\begin{align*}
\int_{\mathbb{R}^N}\Bigl(\mathcal{I}_\mu\ast \frac{F(t_0\chi_{B_1})}{|x|^{\delta}}\Bigr)\frac{F(t_0\chi_{B_1})}{|x|^{\delta}}\dd x=F(t_0)^2\int_{B_1}\int_{B_1}\frac{1}{|x|^{\delta}|x-y|^{\mu}|y|^{\delta}}\dd x \dd y>0,    
\end{align*}
where $B_r$ denotes the open ball centered at the origin with radius $r$ and $\chi_{B_1}$ denotes the standard indicator function of set $B_1$. By the density theorem, there will be
$v_0\in W^{s,p}_{V, \theta}(\mathbb{R}^N)$ with
\begin{align*}
\int_{\mathbb{R}^N}\Bigl(\mathcal{I}_\mu\ast \frac{F(v_0)}{|x|^{\delta}}\Bigr)\frac{F(v_0)}{|x|^{\delta}}\dd x>0.    
\end{align*}
Define the function $v_t(x)=v_0(\frac{x}{t})$, then, using the change of variables $x/t=\Bar{x}$ and $y/t = \Bar{y}$, we have
\begin{align*}
I(v_t)
&=\frac{1}{p}{M}(\|v_t\|_W^p)- \frac{1}{p_s^*(\beta, \theta)} \int_{\mathbb{R}^N} \frac{|v_t|^{p_s^*(\beta, \theta)}}{|x|^{\beta}} \dd x-  \frac{\lambda }{2}\iint_{\mathbb{R}^{2N}}\frac{ F(v_t(x))F(v_t(y))}{|x|^{\delta}|x-y|^\mu|y|^{\delta}}\dd x\dd y\\
& \leqslant \frac{1}{p}C\|v_t\|_W^{p\xi} - \frac{1}{p_s^*(\beta, \theta)} \int_{\mathbb{R}^N} \frac{|v_t|^{p_s^*(\beta, \theta)}}{|x|^{\beta}} \dd x -  \frac{\lambda }{2}\iint_{\mathbb{R}^{2N}}\frac{ F(v_t(x))F(v_t(y))}{|x|^{\delta}|x-y|^\mu|y|^{\delta}}\dd \Bar{x}\dd \Bar{y}\\
&=\frac{1}{p}C\left[t^{N-ps-\theta}\iint_{\mathbb{R}^{2N}}\frac{|v_0(\Bar{x})- v_0(\Bar{y})|^p}{|x|^{\theta_1}|x-y|^{N+sp}|y|^{\theta_2}}\dd \Bar{x}\dd \Bar{y}+t^{N-sp-\theta}\int_{\mathbb{R}^N}\frac{V(t\Bar{x})|v_0|^p}{|\Bar{x}|^{sp+\theta}} \dd \Bar{x}\right]^\xi\\
& \quad - \frac{t^{N-\beta}}{p_s^*(\beta, \theta)} \int_{\mathbb{R}^N} \frac{|v_0|^{p_s^*(\beta, \theta)}}{|\Bar{x}|^{\beta}} \dd \Bar{x} -  t^{2(N-\delta- \mu/2)}\frac{\lambda }{2}\iint_{\mathbb{R}^{2N}}\frac{ F(v_0(x))F(v_0(y))}{|x|^{\delta}|x-y|^\mu|y|^{\delta}}\dd \Bar{x}\dd \Bar{y},
\end{align*}
for sufficiently large $t$. Therefore, we have that $I(v_t)\to-\infty$ as $t\to\infty$ since $1\leqslant \xi<\frac{2(N-\delta -\mu/2)}{N}$ gives that $2(N-\delta-\mu/2)>N\xi>(N-ps-\theta)\xi$. Furthermore, since $\beta < sp+ \theta $, then $N-\beta>N -sp -\theta$. Hence we obtain that the functional $I$ is unbounded from below.
\end{proof}

\section{The compactness of the Cerami sequences}
Next, we prove the important result that the Cerami sequences for the energy functional are bounded.

\begin{lemma}\label{lema:3.4chen}
Assume that \eqref{hip:V}, \eqref{hip:m1}--\eqref{hip:m2} and \eqref{hip:F1}--\eqref{hip:F4} hold. Then $(C)_c-$sequence of $I$ is bounded for any $\lambda>0$.
\end{lemma}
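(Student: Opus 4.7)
The plan is to argue by contradiction: assume that a $(C)_c$-sequence $\{u_n\}$ has $\|u_n\|_W \to +\infty$ along a subsequence, and derive a contradiction from the combined information $I(u_n)\to c$ and $(1+\|u_n\|_W)\|I'(u_n)\|_{W'}\to 0$. The latter immediately yields $\langle I'(u_n),u_n\rangle=o(1)$, so the natural linear combination to consider is
\[
pc + o(1) \;=\; pI(u_n) - \frac{1}{\xi}\langle I'(u_n),u_n\rangle.
\]
Expanding this identity using the definition~\eqref{kirchhoff:funcionalI} of $I$ produces three pieces: a Kirchhoff remainder $M(\|u_n\|_W^p)-\tfrac{1}{\xi}m(\|u_n\|_W^p)\|u_n\|_W^p$, which is non-negative by hypothesis~\eqref{hip:m2}; a Sobolev-critical piece $\bigl(\tfrac{1}{\xi}-\tfrac{p}{p_s^*(\beta,\theta)}\bigr)\int\tfrac{|u_n|^{p_s^*(\beta,\theta)}}{|x|^\beta}\,dx$; and a Choquard-type piece that, after rearranging the double integral and using the identity $\tfrac{1}{\xi}f(t)t-\tfrac{p}{2}F(t)=p|x|^\delta\mathscr{F}(t)$, takes the form $p\lambda\int_{\mathbb{R}^N}\bigl(I_\mu * \tfrac{F(u_n)}{|x|^\delta}\bigr)\mathscr{F}(u_n)\,dx$, with $\mathscr{F}(u_n)\geq 0$ by~\eqref{hip:F4}.

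The central role of Lemma~\ref{lema:2.3chen} enters now: since $|I_\mu * F(u_n)/|x|^\delta|\leq K$ uniformly in $n$, the Choquard piece is dominated in absolute value by $p\lambda K\int_{\mathbb{R}^N}\mathscr{F}(u_n)\,dx$. I would then exploit~\eqref{hip:F4}, which gives $|F(u_n)/|x|^\delta|^\kappa\leq c_1|u_n|^{\kappa p}\mathscr{F}(u_n)$ for $|u_n|\geq r_0$, together with H\"older's inequality and the Stein-Weiss inequality (Proposition~\ref{prop2.1:chen}), to convert this control on $\int\mathscr{F}(u_n)\,dx$ into bounds on the weighted norms $\|u_n\|_{L^{q_i}(\mathbb{R}^N,|x|^{-\delta})}$, which in turn are controlled by $\|u_n\|_W^{q_i}$ via the embedding~\eqref{sobem}. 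Separately, from $\langle I'(u_n),u_n\rangle=o(1)$ and hypothesis~\eqref{hip:m1},
\[
m_0\|u_n\|_W^p \;\leq\; m(\|u_n\|_W^p)\|u_n\|_W^p \;=\; \int_{\mathbb{R}^N}\frac{|u_n|^{p_s^*(\beta,\theta)}}{|x|^\beta}\,dx + \lambda\iint_{\mathbb{R}^{2N}}\frac{F(u_n(x))f(u_n(y))u_n(y)}{|x|^\delta|x-y|^\mu|y|^\delta}\,dx\,dy + o(1),
\]
and by applying Lemma~\ref{lema:2.3chen} and~\eqref{hip:F2} to the double integral, the Choquard cross term is controlled by $C\bigl(\|u_n\|_W^{q_1}+\|u_n\|_W^{q_2}\bigr)$.

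Combining the two sides yields an inequality of the form $m_0\|u_n\|_W^p \leq C_0 + C_1\|u_n\|_W^{q_2} + \int_{\mathbb{R}^N}\frac{|u_n|^{p_s^*(\beta,\theta)}}{|x|^\beta}\,dx + o(1)$, in which the Sobolev-critical integral can in turn be bounded from the first identity (using that $\tfrac{1}{\xi}-\tfrac{p}{p_s^*(\beta,\theta)}>0$, which is compatible with the range $\xi\in[1,2p_s^\flat(\delta,\mu)/p)$ provided by~\eqref{hip:m2}). Since the strict inequalities $q_2<p_s^\sharp(\delta,\theta,\mu)$ in~\eqref{hip:F2} and $p\xi<2p_s^\flat(\delta,\mu)$ in~\eqref{hip:m2} force the exponents on the right-hand side to be strictly less than $p$ in the relevant regime, the assumption $\|u_n\|_W\to+\infty$ yields a contradiction. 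The main obstacle I anticipate is coordinating the three competing growth scales---Kirchhoff at rate $\|u_n\|_W^{p\xi}$, Choquard at rate $\|u_n\|_W^{q_2}$, and Sobolev-critical at rate $\|u_n\|_W^{p_s^*(\beta,\theta)}$---while handling the sign indeterminacy of $F(u_n)$ in the Choquard integrand, where only $\mathscr{F}(u_n)\geq 0$ is available; the precise calibration $\kappa>(N-\beta)/(ps+\theta-\beta)$ in~\eqref{hip:F4} is what makes the H\"older--Stein-Weiss exponents fit strictly and produces a genuine gap between the two sides of the final inequality.
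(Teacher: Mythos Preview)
Your approach has a genuine gap at the closing step. You conclude with an inequality of the shape
\[
m_0\|u_n\|_W^p \;\leqslant\; C_0 + C_1\bigl(\|u_n\|_W^{q_1}+\|u_n\|_W^{q_2}\bigr) + o(1)
\]
and assert that ``the exponents on the right-hand side [are] strictly less than $p$.'' But this is incompatible with the rest of the paper: the mountain-pass geometry in Lemma~\ref{mage}(i) explicitly uses $q_2\geqslant q_1>p$, so $\|u_n\|_W^{q_2}$ dominates $\|u_n\|_W^p$ and no contradiction follows. The constraints $q_2<p_s^{\sharp}(\delta,\theta,\mu)$ and $p\xi<2p_s^{\flat}(\delta,\mu)$ do not force $q_i<p$. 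There is a second, related gap: you propose to ``convert control on $\int\mathscr{F}(u_n)\,\dd x$ into bounds on $\|u_n\|_{L^{q_i}}$'' via~\eqref{hip:F4}, but~\eqref{hip:F4} gives only a \emph{lower} bound $\mathscr{F}(t)\geqslant c_1^{-1}|t|^{-\kappa p}|F(t)/|x|^\delta|^\kappa$, which cannot yield an upper bound on $\int\mathscr{F}(u_n)$; and in any case your first identity controls only $\int(I_\mu\ast F/|x|^\delta)\mathscr{F}(u_n)$, whose sign you yourself flag as indeterminate.

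The paper's argument is structurally different and hinges on a step you omit entirely: the normalized sequence $\omega_n\coloneqq u_n/\|u_n\|_W$. Using hypothesis~\eqref{hip:F3} together with Fatou's lemma, one first shows that the weak limit $\omega$ vanishes a.e.\ (otherwise $(I_\mu\ast F(u_n))F(u_n)/M(\|u_n\|_W^p)$ blows up on a set of positive measure, contradicting the identity $I(u_n)=c+o(1)$). Hence $\omega_n\to 0$ strongly in $L^r$ for $p\leqslant r< Np/(N-ps-\theta)$. One then observes from $I(u_n)=c+o(1)$ the lower bound
\[
\limsup_{n\to\infty}\int_{\mathbb{R}^N}\frac{(I_\mu\ast F(u_n)/|x|^\delta)\,F(u_n)/|x|^\delta}{M(\|u_n\|_W^p)}\,\dd x \;\geqslant\; \frac{\lambda}{2p}>0,
\]
and derives a contradiction by splitting the integral over $\{|u_n|\leqslant r_0\}$ and its complement: the first piece tends to $0$ by~\eqref{hip:F2} and $\omega_n\to 0$ in $L^p$; the second tends to $0$ by~\eqref{hip:F4} and H\"older with exponent $\kappa$, using both the bound $\int(I_\mu\ast F)\mathscr{F}(u_n)\leqslant (c+1)/\lambda$ (from $I(u_n)-\tfrac{1}{p\xi}\langle I'(u_n),u_n\rangle$) and $\omega_n\to 0$ in $L^{p\kappa/(\kappa-1)}$. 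The calibration $\kappa>(N-\beta)/(ps+\theta-\beta)$ is what places $p\kappa/(\kappa-1)$ in the compact range---this is where~\eqref{hip:F4} actually bites, not in a direct algebraic bound on $\|u_n\|_W$. Your proposal invokes neither the normalization nor hypothesis~\eqref{hip:F3}, which is the engine of the contradiction.
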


\begin{proof}
Suppose that  $\{u_n\}\subset W^{s,p}_{V, \theta}(\mathbb{R}^N)$ is a $(C)_c$ sequence for $I(u)$, that is, $I(u_n)\to c$ and $\|I'(u_n)\|_W(1+\|u_n\|_W)\to 0$ as $n\to+\infty$; then
\begin{align}
\label{eq:3.2chen}
c=I(u_n)+o(1),\quad \text{and}\quad \langle I'(u_n),u_n\rangle=o(1)
\end{align}
where $o(1)\to0$ as $n\to+\infty$. We now prove that $\{u_n\}$ is bounded in $W^{s,p}_{V, \theta}(\mathbb{R}^N)$. We argue by contradiction. Suppose that the sequence $\{u_n\}$ is unbounded in $W^{s,p}_{V, \theta}(\mathbb{R}^N)$, then we may assume that
\begin{align}
\label{eq:3.3chen}
\|u_n\|_W\to\infty \qquad (n\to+\infty).
\end{align}
Let $\omega_n(x)=\frac{u_n}{\|u_n\|_W}$, then $\omega_n\in W^{s,p}_{V, \theta}(\mathbb{R}^N)$ with $\|\omega_n\|_W=1$. Hence, up to a subsequence, still denoted by itself, there exists a function $\omega\in W^{s,p, \theta}_V(\mathbb{R}^N)$ such that
\begin{align}
\label{eq:3.4chen}
\omega_n(x)\to\omega(x)\quad \mbox{a.e.\ in}\ \mathbb{R}^N,\qquad \mbox{and}\ \
\omega_n(x)\to\omega(x)\quad \mbox{a.e.\ in}\ L^r(\mathbb{R}^N)
\end{align}
as $n\to\infty$,  for $p\leqslant r<\frac{Np}{N-ps-\theta}$.

Let $\Omega_1=\{x\in\mathbb{R}^N: \omega(x)\neq0\}$; then
\[
\lim\limits_{n\to\infty}\omega_n(x)=\lim\limits_{n\to\infty}\frac{u_n(x)}{\|u_n\|_W}=\omega(x)\neq 0\ \ \mbox{in}\ \Omega_1,
\]
and (\ref{eq:3.3chen}) implies that
\begin{align}
\label{eq:3.5chen}
|u_n|\to\infty \quad \mbox{a.e.\ in}\ \Omega_1.
\end{align}
So, from the assumption \eqref{hip:F3} and Lemma \ref{lema:2.3chen}, we have
\begin{align}
\label{eq:3.6chen}
 \lim\limits_{n\to\infty}\frac{\Bigl(\mathcal{I}_\mu\ast \frac{F(u_n(x))}{|x|^{\delta}}\Bigr)\frac{F(u_n(x))}{|x|^{\delta}}}{|u_n(x)|^{p\xi}}|\omega_n(x)|^{p\xi}=\infty,\ \ \mbox{for\ a.e.}\ x\in\Omega_1.
\end{align}

Hence, there is a constant $C$ such that
\begin{align}
\label{eq:3.7chen}
\frac{\Bigl(\mathcal{I}_\mu\ast \frac{F(u_n)}{|x|^{\delta}}\Bigr)\frac{F(u_n(x))}{|x|^{\delta}}}{|u_n(x)|^{p\xi}}|\omega_n(x)|^{p\xi}-\frac{ C}{\|u_n\|_W^{p\xi}}\geqslant 0.
\end{align}
By (\ref{eq:3.2chen}) we have that
\begin{align}
\label{eq:3.8chen}
c&=I(u_n)+o(1) \nonumber\\
&=\dfrac{1}{p}
M(\|u_n\|^p_{{W}})
-\dfrac{1}{p_{s}^{\ast}(\beta,\theta)}
\int_{\mathbb{R}^{N}} 
\dfrac{|u_n|^{p_{s}^{\ast}(\beta,\theta)}}{|x|^{\beta}} \dd{x} \nonumber\\
& \quad
-\dfrac{\lambda}{2 }
\iint_{\mathbb{R}^{2N}}
\dfrac{F_{\delta, \theta, \mu}(u_n(x))F_{\delta, \theta, \mu}(u_n(y))}%
       {|x|^{\delta}
        |x-y|^{\mu}
        |y|^{\delta}}
        \dd{x}\dd{y}+o(1).
\end{align}
Using this estimate, 
the embedding $W_{V, \theta}^{s,p}(\mathbb{R}^N) \hookrightarrow L^{{p_{s}^{\ast}(\beta,\theta)}}(\mathbb{R}^N, |y|^{-\beta})$
together with hypotheses \eqref{hip:m1}--\eqref{hip:m2}, 
we find
\begin{align}
\label{eq:3.9chen}
{} & \max\Bigl\{\frac{1}{2}, \frac{1}{\lambda p_s^{\ast}(\beta, \theta)}\Bigr\} \Bigl(\int_{\mathbb{R}^{N}}\Bigl(\mathcal{I}_\mu\ast \frac{F(u_n)}{|x|^{\delta}}\Bigr)\frac{F(u_n)}{|x|^{\delta}} \dd x+ \int_{\mathbb{R}^{N}} 
\dfrac{|u_n|^{p_{s}^{\ast}(\beta,\theta)}}{|x|^{\beta}} \dd{x} \Bigr)\nonumber \\
& \geqslant \frac{1}{2}\int_{\mathbb{R}^{N}}\Bigl(\mathcal{I}_\mu\ast \frac{F(u_n)}{|x|^{\delta}}\Bigr)\frac{F(u_n)}{|x|^{\delta}}\dd x + \dfrac{1}{\lambda p_{s}^{\ast}(\beta,\theta)}
\int_{\mathbb{R}^{N}} 
\dfrac{|u_n|^{p_{s}^{\ast}(\beta,\theta)}}{|x|^{\beta}} \dd{x}\nonumber\\
& \geqslant \frac{1}{\xi p\lambda} m(\|u_n\|_W^{p})\|u_n\|_W^{p}-  \frac{c}{\lambda}+\frac{o(1)}{\lambda}\nonumber\\
& \geqslant \frac{m_0}{\xi p\lambda} \|u_n\|_W^{p} -  \frac{c}{\lambda}+\frac{o(1)}{\lambda}\nonumber\\
 & \to\infty,\quad\mbox{as}\ \ n\to\infty.
\end{align}
We claim that $\mbox{meas}(\Omega_1)=0$. Indeed, if $\mbox{meas}(\Omega_1)\neq 0$.
From (\ref{eq:3.6chen}) and (\ref{eq:3.7chen}), we have
\begin{align}
\label{eq:3.10chen}
+\infty&=
\int_{\Omega_1}\liminf\limits_{n\to\infty}\frac{\Bigl(\mathcal{I}_\mu\ast \frac{F(u_n(x))}{|x|^{\delta}}\Bigr)\frac{F(u_n(x))}{|x|^{\delta}}}{|u_n(x)|^{p\xi}}|\omega_n(x)|^{p\xi}\dd x \nonumber\\
& \quad + \int_{\Omega_1}\liminf\limits_{n\to\infty} \frac{|u_n|^{p_s^{\ast}(\beta, \theta)}}{|x|^{\beta}|u_n(x)|^{p\xi}}|\omega_n(x)|^{p\xi} \dd x 
 - \int_{\Omega_1}\limsup\limits_{n\to\infty}\frac{ C }{ \|u_n\|_W^{p\xi} }\dd x\nonumber\\
&\leqslant \int_{\Omega_1}\liminf\limits_{n\to\infty}\left(\frac{\Bigl(\mathcal{I}_\mu\ast \frac{F(u_n(x))}{|x|^{\delta}}\Bigr)\frac{F(u_n(x))}{|x|^{\delta}}}{|u_n(x)|^{p\xi}}|\omega_n(x)|^{p\xi}+ \frac{|u_n|^{p_s^{\ast}(\beta, \theta)}}{|x|^{\beta}|u_n(x)|^{p\xi}}|\omega_n(x)|^{p\xi}
-  \frac{ C }{ \|u_n\|_W^{p\xi} }\right)\dd x\nonumber\\
\intertext{and by Fatou's lemma,}
& \leqslant \liminf\limits_{n\to\infty}\int_{\Omega_1}\left(\frac{\Bigl(\mathcal{I}_\mu\ast \frac{F(u_n(x))}{|x|^{\delta}}\Bigr)\frac{F(u_n(x))}{|x|^{\delta}}}{|u_n(x)|^{p\xi}}|\omega_n(x)|^{p\xi} + \frac{|u_n|^{p_s^{\ast}(\beta, \theta)}}{|x|^{\beta}|u_n(x)|^{p\xi}}|\omega_n(x)|^{p\xi}
-  \frac{ C }{ \|u_n\|_W^{p\xi} }\right)\dd x\nonumber\\
&=\liminf\limits_{n\to\infty}\int_{\Omega_1}\left(\frac{\Bigl(\mathcal{I}_\mu\ast \frac{F(u_n)}{|x|^{\delta}}\Bigr)\frac{F(u_n)}{|x|^{\delta}} }{  \|u_n\|_W^{p\xi} }
+ \frac{|u_n|^{p_s^{\ast}(\beta, \theta)}}{|x|^{\beta}\|u_n\|_W^{p\xi}}- \frac{ C }{ \|u_n\|_W^{p\xi} }\right)\dd x\nonumber\\
\intertext{and by \eqref{eq:3.1chen},}
& \leqslant \liminf\limits_{n\to\infty}\int_{\Omega_1} \Bigl(\frac{C\Bigl(\mathcal{I}_\mu\ast \frac{F(u_n)}{|x|^{\delta}}\Bigr)\frac{F(u_n)}{|x|^{\delta}} }{{M}(\|u_n\|_W^{p}) } + \frac{C|u_n|^{p_s^{\ast}(\beta, \theta)}}{|x|^{\beta}{M}(\|u_n\|_W^{p}) }\Bigr) \dd x
- \liminf\limits_{n\to\infty} \int_{\Omega_1}\frac{C }{ \|u_n\|_W^{p\theta} }\dd x\nonumber\\
& \leqslant \liminf\limits_{n\to\infty}\int_{\mathbb{R}^N} \Bigl(\frac{C\Bigl(\mathcal{I}_\mu\ast \frac{F(u_n)}{|x|^{\delta}}\Bigr)\frac{F(u_n)}{|x|^{\delta}} }{{M}(\|u_n\|_W^{p}) } + \frac{C\frac{|u_n|^{p_s^{\ast}(\beta, \theta)}}{|x|^{\beta}}}{{M}(\|u_n\|_W^{p}) }\Bigr) \dd x \nonumber \\
&=\frac{ C}{p}\liminf\limits_{n\to\infty}\int_{\mathbb{R}^N} \frac{\Bigl(\mathcal{I}_\mu\ast \frac{F(u_n)}{|x|^{\delta}}\Bigr)\frac{F(u_n)}{|x|^{\delta}} + \frac{|u_n|^{p_s^{\ast}(\beta, \theta)}}{|x|^{\beta}} }{ \frac{1}{p}{M}(\|u_n\|_W^{p}) }\dd x\nonumber\\
\intertext{and by \eqref{eq:3.8chen},}
&=\frac{ C}{p}\liminf\limits_{n\to\infty} \frac{\int_{\mathbb{R}^N}\Bigl(\Bigl(\mathcal{I}_\mu\ast \frac{F(u_n)}{|x|^{\delta}}\Bigr)\frac{F(u_n)}{|x|^{\delta}} + \frac{|u_n|^{p_s^{\ast}(\beta, \theta)}}{|x|^{\beta}}\Bigr)\dd x}{ \frac{\lambda}{2}\int_{\mathbb{R}^{N}}\Bigl(\mathcal{I}_\mu\ast \frac{F(u_n)}{|x|^{\delta}}\Bigr)\frac{F(u_n)}{|x|^{\delta}}\dd x+\frac{1}{p_s^{\ast}(\beta, \theta)}\int_{\mathbb{R}^N}\frac{|u_n|^{p_s^{\ast}(\beta, \theta)}}{|x|^{\beta}}\dd x+ c-o(1) } \nonumber \\
& \leqslant \frac{ C}{p}\liminf\limits_{n\to\infty} \frac{\int_{\mathbb{R}^N}\Bigl(\Bigl(\mathcal{I}_\mu\ast \frac{F(u_n)}{|x|^{\delta}}\Bigr)\frac{F(u_n)}{|x|^{\delta}} + \frac{|u_n|^{p_s^{\ast}(\beta, \theta)}}{|x|^{\beta}}\Bigr)\dd x}{ \max\Bigl\{\frac{\lambda}{2}, \frac{1}{p_s^{\ast}(\beta, \theta)}\Bigr\}\Bigl(\int_{\mathbb{R}^{N}}\Bigl(\mathcal{I}_\mu\ast \frac{F(u_n)}{|x|^{\delta}}\Bigr)\frac{F(u_n)}{|x|^{\delta}}\dd x+\int_{\mathbb{R}^N}\frac{|u_n|^{p_s^{\ast}(\beta, \theta)}}{|x|^{\beta}}\dd x\Bigr)+ c-o(1) }.
\end{align}
So, by (\ref{eq:3.9chen}) and (\ref{eq:3.10chen}) we get the contradiction
$+\infty\leqslant C/[p\max\{\lambda/2, 1/p_s^{\ast}(\beta, \theta)]$.
This shows that $\mbox{meas}(\Omega_1)=0$. Hence, $\omega(x)=0$ for almost all $x\in\mathbb{R}^N$ and the convergence in (\ref{eq:3.4chen}) means that
\begin{align}
\label{eq:3.11chen}
\omega_n(x)\to 0\quad \mbox{a.e.\ in}\ \mathbb{R}^N,\quad \mbox{and}\ \
\omega_n(x)\to0\quad \mbox{a.e.\ in}\ L^r(\mathbb{R}^N)\ \ \ \mbox{as}\ n\to\infty,
\end{align}
for $p\leqslant r< \frac{Np}{N-ps-\theta}$.

Using (\ref{eq:3.2chen}), \eqref{hip:m2}, $p_s^{\ast}(\beta, \theta)>p$ and $\xi \geqslant 1$, we get
\begin{align}
\label{eq:3.12chen}
c+1\geqslant&I(u_n)-\frac{1}{p\xi}\langle I'(u_n),u_n\rangle\nonumber\\
& = \frac{1}{p}{M}(\|u_n\|_W^p)-   \frac{1}{p\xi}m(\|u_n\|_W^p)\|u_n\|_W^p\nonumber\\
& \quad + \Bigl( \frac{1}{p\xi} -\frac{1}{p_s^{\ast}(\beta, \theta)} \Bigr)\int_{\mathbb{R}^N} \frac{|u_n|^{p_s^{\ast}(\beta, \theta)}}{|x|^{\beta}} \dd x\nonumber\\
&\quad +\lambda \int_{\mathbb{R}^{N}}\Bigl(\mathcal{I}_\mu\ast \frac{F(u_n)}{|x|^{\delta}}\Bigr)\left(\frac{1}{p\xi}\frac{f(u_n)}{|x|^{\delta}}u_n-\frac{1}{2}\frac{F(u_n)}{|x|^{\delta}}\right)\dd x\nonumber\\
& \geqslant \lambda \int_{\mathbb{R}^{N}}(\mathcal{I}_\mu\ast F(u_n))\mathscr{F}(u_n)\dd x,
\end{align}
for $n$ large enough.

For $a,b\geqslant 0$, let us define 
\begin{align*}
    \Omega_n^{\ast}(a,b)
    &\coloneqq\Big\{x\in\mathbb{R}^N:a\leqslant \frac{|u_n(x)|}{|x|^{\beta/(p_s^{\ast}(\beta, \theta)-p)}}\leqslant b\Big\} \\
    \Omega_n^i(a,b)
    &\coloneqq\Big\{x\in\mathbb{R}^N:a\leqslant \frac{|u_n(x)|}{|x|^{\delta/(q_i-p)}}\leqslant b\Big\}\quad (i\in\{1,2\}).
\end{align*}
From \eqref{hip:m1} and \eqref{hip:m2}, we have that
\begin{align}
\label{eq:3.13chen}
{M}(\|u_n\|_W^p)\geqslant \frac{1}{\xi}m(\|u_n\|_W^p) \|u_n\|_W^p \geqslant\frac{m_0}{\xi}  \|u_n\|_W^p.
\end{align}
This inequality, together with (\ref{eq:3.3chen}) and (\ref{eq:3.8chen}) yields that
\begin{align}\label{eq:3.14chen}
0<\frac{\lambda}{2p}
&\leqslant 
\limsup\limits_{n\to\infty}\frac{\int_{\mathbb{R}^{N}}\Bigl(\mathcal{I}_\mu\ast \frac{F(u_n)}{|x|^{\delta}}\Bigr)\frac{F(u_n)}{|x|^{\delta}} \dd x } {{M}(\|u_n\|_W^p)}\nonumber\\
 & = \limsup\limits_{n\to\infty}\int_{\mathbb{R}^{N}}\frac{\Bigl(\mathcal{I}_\mu\ast \frac{F(u_n)}{|x|^{\delta}}\Bigr)\frac{F(u_n)}{|x|^{\delta}}} {{M}(\|u_n\|_W^p)}\dd x\nonumber\\
&=\limsup\limits_{n\to\infty}\left(\int_{\Omega_n^1\cap \Omega_n^2\cap \Omega_n^{\ast}(0,r_0)}+ \int_{\mathbb{R}^N \backslash \Omega_n^1\cap \Omega_n^2\cap \Omega_n^{\ast}(0,r_0)}\right) \frac{\Bigl(\mathcal{I}_\mu\ast \frac{F(u_n)}{|x|^{\delta}}\Bigr)\frac{F(u_n)}{|x|^{\delta}} } {{M}(\|u_n\|_W^p)}\dd x.
\end{align}

To simplify the notation, from now on we write $\Omega_n^1\cap \Omega_n^2\cap \Omega_n^{\ast}(0,r_0) = \Omega(0,r_0)$. On the one hand, by Lemma \ref{lema:2.3chen}, (\ref{eq:3.13chen}),
\eqref{hip:F2}, and~(\ref{eq:3.11chen}), we obtain
\begin{align}\label{eq:3.15chen}
\lefteqn{\int_{\Omega(0,r_0)} \frac{\Bigl(\mathcal{I}_\mu\ast \frac{F(u_n)}{|x|^{\delta}}\Bigr)\frac{F(u_n)}{|x|^{\delta}} } {{M}(\|u_n\|_W^p)}\dd x} \nonumber \\
& \leqslant \int_{\Omega(0,r_0)} \frac{K\frac{|F(u_n)|}{|x|^{\delta}} } {{M}(\|u_n\|_W^p)}\dd x \nonumber \\
&\leqslant  \frac{K\xi}{m_0}\int_{\Omega(0,r_0)} \frac{ \frac{|F(u_n)|}{|x|^{\delta}} } { \|u_n\|_W^p }\dd x\nonumber\\
&\leqslant \frac{c_0K\xi}{m_0}\int_{\Omega(0,r_0)}   \frac{1}{|x|^{\delta}}\left(\frac{ |u_n|^{q_1-p}} {q_1 }|\omega_n|^p+\frac{ |u_n|^{q_2-p}} {q_2 }|\omega_n|^p\right)\dd x + \frac{\xi}{m_0} \int_{\Omega(0,r_0)}  \frac{|u_n|^{p_s^{\ast}(\beta, \theta)-p}}{|x|^{\beta}}|\omega_n|^p\dd x \nonumber\\
&\leqslant  \frac{c_0K\xi}{m_0} \left(\frac{ r_0^{q_1-p}} {q_1 } +\frac{ r_0^{q_2-p}} {q_2 }\right)\int_{\Omega(0,r_0)} |\omega_n|^p\dd x + \frac{\xi}{m_0}r_0^{p_s^{\ast}(\beta, \theta)-p} \int_{\Omega(0,r_0)} |\omega_n|^p\dd x \nonumber \\
&\to 0,\ \ \mbox{as}\ n\to\infty.
\end{align}
On the other hand, using H\"{o}lder inequality,  (\ref{eq:3.11chen}), (\ref{eq:3.12chen}) and \eqref{hip:F4}, we find
\begin{align}\label{eq:3.16chen}
\lefteqn{\int_{\mathbb{R}^N \backslash \Omega(0,r_0)}  \frac{\Bigl|\mathcal{I}_\mu\ast \frac{F(u_n)}{|x|^{\delta}}\Bigr|\frac{F(u_n)}{|x|^{\delta}}} {{M}(\|u_n\|_W^p)}\dd x}\nonumber\\
&\leqslant\frac{\xi}{m_0}c_1^{\frac{1}{\kappa}}K^{\frac{\kappa-1}{\kappa}}\left(\int_{\mathbb{R}^N \backslash \Omega(0,r_0)}  \Bigl|\mathcal{I}_\mu\ast \frac{F(u_n)}{|x|^{\delta}}\Bigr| \mathscr{F}(u_n) \dd x\right)^{\frac{1}{\kappa}}
\left(\int_{\mathbb{R}^N \backslash \Omega(0,r_0)}   |\omega_n(x)|^{\frac{\kappa p}{\kappa-1}}\dd x\right)^{\frac{\kappa-1}{\kappa}}\nonumber\\
& \leqslant\frac{\xi}{m_0}c_1^{\frac{1}{\kappa}}K^{\frac{\kappa-1}{\kappa}}\left(\frac{c+1}{\lambda}\right)^{\frac{1}{\kappa}}
\left(\int_{\mathbb{R}^N \backslash \Omega(0,r_0)}   |\omega_n(x)|^{\frac{\kappa p}{\kappa-1}}\dd x\right)^{\frac{\kappa-1}{\kappa}}\to0,\ \   \mbox{as}\ n\to\infty.
\end{align}
Here we used the fact that $\frac{\kappa p}{\kappa-1}\in (p,\frac{p(N-\beta)}{N-ps-\theta})$ if $\kappa>\frac{N-\beta}{ps+\theta-\beta}$.
Thus, we get a contradiction from (\ref{eq:3.14chen})-(\ref{eq:3.16chen}). The proof is complete. 
\end{proof}

\begin{lemma}\label{mage2}
Assume that \eqref{hip:V}, \eqref{hip:m1}--\eqref{hip:m2} and \eqref{hip:F1}--\eqref{hip:F4} hold. Then the functional $I$ satisfies $(C)_c-$condition for any $\lambda>0$.
\end{lemma}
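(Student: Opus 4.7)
My plan is to leverage the boundedness of Cerami sequences established in Lemma~\ref{lema:3.4chen}, the weak-to-strong continuity of the critical Sobolev term from Lemma~\ref{sobolev} and of the Choquard term from Lemma~\ref{lema2a}, and an $(S_+)$-type property of the weighted fractional $p$-Laplacian in order to upgrade weak convergence to strong convergence. First, I fix a $(C)_c$ sequence $\{u_n\}\subset W$, with the abbreviation $W\coloneqq W^{s,p}_{V,\theta}(\mathbb{R}^N)$. Lemma~\ref{lema:3.4chen} guarantees it is bounded, so by reflexivity, up to a subsequence, $u_n \rightharpoonup u$ weakly in $W$, $u_n \to u$ a.e.\xspace in $\mathbb{R}^N$, and $\|u_n\|_W \to t_0$ for some $t_0 \geqslant 0$. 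If $t_0 = 0$ then $u_n \to 0$ in $W$ and nothing remains, so I may assume $t_0 > 0$ and hence $m(\|u_n\|_W^p) \to m(t_0^p) \geqslant m_0 > 0$ by \eqref{hip:m1} and continuity of $m$.

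Next, Lemmas~\ref{sobolev} and~\ref{lema2a} yield $\Xi'(u_n) \to \Xi'(u)$ and $\Psi'(u_n) \to \Psi'(u)$ strongly in $W'$. Testing $I'(u_n)$ against $u_n-u$ and using the Cerami hypothesis $\|I'(u_n)\|_{W'}\|u_n-u\|_W \to 0$, the decomposition
\begin{align*}
\langle I'(u_n), u_n-u\rangle = m(\|u_n\|_W^p)\langle A(u_n), u_n-u\rangle - \langle \Xi'(u_n), u_n-u\rangle - \lambda\langle \Psi'(u_n), u_n-u\rangle
\end{align*}
forces the first term on the right to vanish in the limit, since the $\Xi'$ and $\Psi'$ brackets split as (strong convergence)$\times$(bounded) plus (fixed functional)$\times$(weakly null). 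Here $A$ denotes the weighted fractional $p$-Laplacian plus Hardy-type operator inside the brackets in Lemma~\ref{lema2.2:chen}. Because $m(\|u_n\|_W^p)$ is bounded below away from zero, this gives $\langle A(u_n), u_n - u\rangle \to 0$.

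The decisive step\,---\,and the main obstacle\,---\,is to conclude that $u_n \rightharpoonup u$ in $W$ together with $\langle A(u_n), u_n - u\rangle \to 0$ implies $u_n \to u$ strongly, i.e.,\xspace to establish the $(S_+)$-property for the weighted fractional $p$-Laplacian plus weighted $p$-Hardy term. The plan is to invoke the pointwise Simon inequalities
\begin{align*}
(|a|^{p-2}a - |b|^{p-2}b)(a-b) \geqslant
\begin{cases}
c_p\,|a-b|^p & \text{if } p\geqslant 2,\\
c_p\,(|a|+|b|)^{p-2}|a-b|^2 & \text{if } 1<p<2,
\end{cases}
\end{align*}
applied pointwise to the Gagliardo differences and to the Hardy term, then integrated against the singular kernels $|x|^{-\theta_1}|x-y|^{-(N+sp)}|y|^{-\theta_2}$ and $V(x)|x|^{-\alpha}$. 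In the case $p\geqslant 2$ this directly bounds $\|u_n-u\|_W^p$ by cross terms, each of which either appears in $\langle A(u_n), u_n-u\rangle$ or tends to zero via a Brezis--Lieb decomposition analogous to the one already carried out in the proof of Lemma~\ref{lema2.2:chen}. For $1<p<2$ an extra H\"older step is required, using the uniform boundedness of $(|u_n|+|u|)^{p-2}$ in the appropriate weighted Lebesgue space to absorb the singular factor. Either way the outcome is $u_n \to u$ in $W$, which verifies the $(C)_c$-condition and completes the proof.
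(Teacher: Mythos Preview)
Your proposal is correct and follows essentially the same strategy as the paper: boundedness from Lemma~\ref{lema:3.4chen}, weak--strong continuity of $\Xi'$ and $\Psi'$ from Lemmas~\ref{sobolev} and~\ref{lema2a} to kill the nonlinear terms when paired with $u_n-u$, and then the Simon inequalities~\eqref{simon} (split into the cases $p\geqslant 2$ and $1<p<2$) to upgrade $\langle A(u_n)-A(u),u_n-u\rangle\to 0$ to strong convergence. The only cosmetic difference is that the paper tests $\langle I'(u_n)-I'(u),u_n-u\rangle$ directly, whereas you test $\langle I'(u_n),u_n-u\rangle$ and absorb $\langle A(u),u_n-u\rangle$ separately via weak convergence; also, your reference to a Brezis--Lieb decomposition in the $p\geqslant 2$ case is unnecessary, since the Simon inequality already gives $\|u_n-u\|_W^p\leqslant c_p\bigl[\langle A(u_n),u_n-u\rangle-\langle A(u),u_n-u\rangle\bigr]$ outright.
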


\begin{proof}
Suppose that  $\{u_n\}\subset W^{s,p}_{V, \theta}(\mathbb{R}^N)$ is a $(C)_c$ sequence for $I$, from Lemma \ref{lema:3.4chen}, we have that $\{u_n\}$ is bounded in $W^{s,p}_{V, \theta}(\mathbb{R}^N)$, then if necessary to a subsequence, we have
\begin{align}
\label{eq:3.17chen}
&u_n\rightharpoonup u \ \ \mbox{in}\ W^{s,p}_{V, \theta}(\mathbb{R}^N),\quad u_n\to u\ \ \mbox{a.e.\ in}\ \mathbb{R}^N,\nonumber\\
&u_n\to u\ \ \mbox{in} \ L^{q_1}(\mathbb{R}^N,|x|^{-\delta})\cap L^{q_2}(\mathbb{R}^N, |x|^{-\delta}),\\
&\frac{|u_n|}{|x|^{\delta}}\leqslant \ell(x)\ \ \mbox{a.e.\ in}\ \mathbb{R}^N,\ \ \mbox{for\ some}\ \ell(x)\in L^{q_1}(\mathbb{R}^N, |x|^{-\delta})\cap L^{q_2}(\mathbb{R}^N, |x|^{-\delta}).\nonumber
\end{align}
For simplicity, let $\varphi\in W_{V, \theta}^{s,p}(\mathbb{R}^N)$ be fixed and denote by $B_\varphi$ the linear functional on
$W_{V, \theta}^{s,p}(\mathbb{R}^N)$ defined by
\begin{align*}
B_\varphi(v)&=\iint_{\mathbb{R}^{2N}}\frac{|\varphi(x)-\varphi(y)|^{p-2}(\varphi(x)-\varphi(y))
}{|x|^{\theta_1}|x-y|^{N+ps}|y|^{\theta_2}}(v(x)-v(y))\dd x\dd y.
\end{align*}
for all $v\in W_{V, \theta}^{s,p}(\mathbb{R}^N)$.
By H\"older inequality, we have
\begin{align*}
\lefteqn{\iint_{\mathbb{R}^{2N}}\frac{|\varphi(x)-\varphi(y)|^{p-2}(\varphi(x)-\varphi(y))
}{|x|^{\theta_1}|x-y|^{N+ps}|y|^{\theta_2}}(v(x)-v(y))\dd x\dd y} \\
& = \iint_{\mathbb{R}^{2N}}\frac{|\varphi(x)-\varphi(y)|^{p-1}
}{|x|^{\theta_1-\theta_1/p}|x-y|^{(N+ps)-(N+ps)/p}|y|^{\theta_2-\theta_2/p}}\frac{(v(x)-v(y))}{|x|^{\theta_1/p}|x-y|^{(N+ps)/p}|y|^{\theta_2/p}}\dd x\dd y \\
& \leqslant  \Bigl(\iint_{\mathbb{R}^{2N}}\Bigl(\frac{|\varphi(x)-\varphi(y)|^{p-1}
}{|x|^{\theta_1-\theta_1/p}|x-y|^{(N+ps)-(N+ps)/p}|y|^{\theta_2-\theta_2/p}}\Bigr)^{\frac{p}{p-1}} \dd x\dd y\Bigr)^{\frac{p-1}{p}}\\
& \quad  \Bigl(\iint_{\mathbb{R}^{2N}}\Bigl(\frac{(v(x)-v(y))}{|x|^{\theta_1/p}|x-y|^{(N+ps)/p}|y|^{\theta_2/p}}\Bigr)^{p} \dd x\dd y\Bigr)^{\frac{1}{p}}\\
& \leqslant [\varphi]_{s,p, \theta}^{p-1}[v]_{s,p, \theta} \leqslant 
\|\varphi\|_W^{p-1}\|v\|_W,
\end{align*}
for all $v\in W_{V, \theta}^{s,p}(\mathbb{R}^N)$.
Hence, from~\eqref{eq:3.17chen} we deduce that
\begin{align}
\label{eq:3.18chen}
\lim\limits_{n\to\infty}\Big(m(\|u_n\|_W^p)-m(\|u\|_W^p)\Big)B_u(u_n-u)=0,
\end{align}
because the sequence $\{m(\|u_n\|_W^p)-m(\|u\|_W^p)\}_{n\in\mathbb{N}} \subset \mathbb{R}$ is bounded.

Since $I'(u_n)\to0$ in $(W^{s,p}_{V, \theta}(\mathbb{R}^N))'$ and $u_n\rightharpoonup u$ in  $W^{s,p}_{V, \theta}(\mathbb{R}^N)$, we have
\begin{align*}
    \langle I'(u_n)-I'(u),u_n-u\rangle\to0
    \quad \text{as } n\to\infty.
\end{align*}
Following up, we get
\begin{align}
\label{eq:3.19chen}
o(1)&=\langle I'(u_n)-I'(u),u_n-u\rangle\nonumber\\
&= m(\|u_n\|_W^p)\|u_n-u\|_W^p - m(\|u\|_W^p)\|u_n-u\|_W^p\nonumber\\
&\quad- \int_{\mathbb{R}^N} \frac{|u_n|^{p^{\ast}_s(\beta,\theta)}u_n(u_n-u)}{|x|^{\beta}}\dd x + \int_{\mathbb{R}^N} \frac{|u|^{p^{\ast}_s(\beta,\theta)}u(u_n-u)}{|x|^{\beta}}\dd x \nonumber \\
&\quad - \lambda \int_{\mathbb{R}^N} \Bigl(\mathcal{I}_{\mu} \ast \frac{F(u_n)}{|x|^{\delta}}\Bigr)\frac{f(u_n)}{|x|^{\delta}}(u_n-u)\dd x  + \lambda \int_{\mathbb{R}^N} \Bigl(\mathcal{I}_{\mu} \ast \frac{F(u)}{|x|^{\delta}}\Bigr)\frac{f(u)}{|x|^{\delta}}(u_n-u)\dd x \nonumber \\
& = m(\|u_n\|_W^p)\Big(B_{u_n}(u_n-u)+\int_{\mathbb{R}^N}\frac{V(x)|u_n|^{p-2}u_n(u_n-u)}{|x|^{\alpha}}\dd x\Big)\nonumber\\
& \quad -m(\|u\|_W^p)\Big(B_{u}(u_n-u)+\int_{\mathbb{R}^N}\frac{V(x)|u|^{p-2}u(u_n-u)}{|x|^{\alpha}}\dd x\Big)\nonumber\\
& \quad +m(\|u_n\|_W^p)B_{u}(u_n-u) - m(\|u_n\|_W^p)B_{u}(u_n-u) \nonumber \\
& \quad +m(\|u_n\|_W^p)\frac{V(x)|u|^{p-2}u(u_n-u)}{|x|^{\alpha}}\dd x - m(\|u_n\|_W^p)\frac{V(x)|u|^{p-2}u(u_n-u)}{|x|^{\alpha}}\dd x \nonumber \\
& \quad - \int_{\mathbb{R}^N}\frac{(|u_n|^{p^{\ast}_s(\beta, \theta)-2}u_n- |u|^{p^{\ast}_s(\beta, \theta)-2}u)(u_n-u)}{|x|^{\alpha}} \dd x\nonumber \\
& \quad -\lambda\int_{\mathbb{R}^N}\Big[\Bigl(\mathcal{I}_\mu\ast \frac{F(u_n)}{|x|^{\delta}}\Bigr)\frac{f(u_n)}{|x|^{\delta}}-\Bigl(\mathcal{I}_\mu\ast \frac{F(u)}{|x|^{\delta}}\Bigr)\frac{f(u)}{|x|^{\delta}}\Big](u_n-u)\dd x\nonumber\\
& = m(\|u_n\|_W^p) \Big[B_{u_n}(u_n-u)- B_{u}(u_n-u)\Big] \nonumber\\
& \quad +\Big(m(\|u_n\|_W^p)-m(\|u\|_W^p)\Big) B_{u}(u_n-u)\nonumber\\
& \quad + m(\|u_n\|_W^p) \int_{\mathbb{R}^N}\frac{V(x)(|u_n|^{p-2}u_n-|u|^{p-2}u)(u_n-u)}{|x|^{\alpha}}\dd x\nonumber\\
& \quad +[m(\|u_n\|_W^p)-m(\|u\|_W^p)] \int_{\mathbb{R}^N}\frac{V(x)|u|^{p-2}u(u_n-u)}{|x|^{\alpha}}\dd x\nonumber\\
& \quad - \int_{\mathbb{R}^N}\frac{(|u_n|^{p^{\ast}_s(\beta, \theta)-2}u_n- |u|^{p^{\ast}_s(\beta, \theta)-2}u)(u_n-u)}{|x|^{\alpha}} \dd x\nonumber \\
& \quad -\lambda\int_{\mathbb{R}^N}\Big[\Bigl(\mathcal{I}_\mu\ast \frac{F(u_n)}{|x|^{\delta}}\Bigr)\frac{f(u_n)}{|x|^{\delta}}-\Bigl(\mathcal{I}_\mu\ast \frac{F(u)}{|x|^{\delta}}\Bigr)\frac{f(u)}{|x|^{\delta}}\Big](u_n-u)\dd x.
\end{align}
From Lemma \ref{lema:2.4chen}, we have
\begin{align}
\label{eq:3.20chen}
\int_{\mathbb{R}^N}\Big[\Bigl(\mathcal{I}_\mu\ast \frac{F(u_n)}{|x|^{\delta}}\Bigr)\frac{f(u_n)}{|x|^{\delta}}-\Bigl(\mathcal{I}_\mu\ast \frac{F(u)}{|x|^{\delta}}\Bigr)\frac{f(u)}{|x|^{\delta}}\Big](u_n-u)\dd x\to0,\quad \mbox{as}\ n\to\infty.
\end{align}
Moreover, using H\"{o}lder inequality, we have
\begin{align}
 \int_{\mathbb{R}^N}\frac{V(x)|u|^{p-2}u(u_n-u)}{|x|^{\alpha}}\dd x
& =  
\int_{\mathbb{R}^N}\frac{(V(x))^{(p-1)/p}|u|^{p-1}}{|x|^{\alpha(p-1)/p}}\frac{(V(x))^{1/p}|u_n-u|^{p}}{|x|^{\alpha/p}}\dd x \nonumber \\
& \leqslant \Bigl(\int_{\mathbb{R}^N}\frac{V(x)|u|^p}{|x|^{\alpha}}\dd x\Bigr)^{\frac{p-1}{p}}
\Bigl(\int_{\mathbb{R}^N}\frac{V(x)|u_n-u|^p}{|x|^{\alpha}}\dd x\Bigr)^{\frac{1}{p}}.
\end{align}
From inequality above and~(\ref{eq:3.17chen}), we obtain
\begin{align}
    \label{eq:3.21chenu}
    [m(\|u_n\|_W^p)-m(\|u\|_W^p)] \int_{\mathbb{R}^N}\frac{V(x)|u|^{p-2}u(u_n-u)}{|x|^{\alpha}}\dd x \to0,\quad \mbox{as}\ n\to\infty.
\end{align}
From (\ref{eq:3.18chen})-(\ref{eq:3.21chenu}) and \eqref{hip:m1}, we obtain
\begin{align*}
\lim\limits_{n\to\infty}m(\|u_n\|_W^p)\left(\Big[B_{u_n}(u_n-u)- B_{u}(u_n-u)\Big]+\int_{\mathbb{R}^N}\frac{V(x)(|u_n|^{p-2}u_n-|u|^{p-2}u)(u_n-u)}{|x|^{\alpha}}\dd x\right)=0.
\end{align*}
Since $m(\|u_n\|_W^p)[ B_{u_n}(u_n-u)- B_{u}(u_n-u)] \geqslant0$ and $\frac{V(x)(|u_n|^{p-2}u_n-|u|^{p-2}u)(u_n-u)}{|x|^{\alpha}}\geqslant0$ for all $n$ by convexity, \eqref{hip:m1} and $(V_1)$,
we have
\begin{align}\label{eq:3.22chena}
\lim\limits_{n\to\infty} \Big[B_{u_n}(u_n-u)- B_{u}(u_n-u)\Big]=0
\end{align}
and    
\begin{align}\label{eq:3.22chenb}
\lim\limits_{n\to\infty} \int_{\mathbb{R}^N}\frac{V(x)(|u_n|^{p-2}u_n-|u|^{p-2}u)(u_n-u)}{|x|^{\alpha}}\dd x=0.
\end{align}
Let us now recall the well-known Simon inequalities. There
exist positive numbers $c_p$ and $C_p$, depending only
on $p$, such that
\begin{equation} 
\label{simon}
|\xi-\eta|^{p}
\leqslant  \begin{cases}
 c_p (|\xi|^{p-2}\xi-|\eta|^{p-2}\eta)(\xi-\eta)
& \text{for } p\geqslant2,\\[3pt]
C_p\big[(|\xi|^{p-2}\xi-|\eta|^{p-2}\eta)  (\xi-\eta) \big]^{p/2}(|\xi|^p+|\eta|^p)^{(2-p)/2}
& \text{for }1<p<2,
\end{cases}
\end{equation}
for all $\xi,\eta\in\mathbb{R}^N$. According to the Simon
inequality, we divide the discussion into two cases.

\medskip

{\item \scshape Case $p\geqslant 2$:} From (\ref{eq:3.22chena}) and (\ref{simon}), as $n\to\infty$, we have
\begin{align*}
[u_n-u]_{W_{\theta}^{s,p}(\mathbb{R}^N)}^p
& = \iint_{\mathbb{R}^{2N}}\frac{|u_n(x)-u(x)-u_n(y)+u(y)|^p}{|x|^{\theta_1}|x-y|^{N+ps}|y|^{\theta_2}}\dd x\dd y\\
& =  \iint_{\mathbb{R}^{2N}}\frac{|(u_n(x)-u_n(y))-(u(x)-u(y))|^p}{|x|^{\theta_1}|x-y|^{N+ps}|y|^{\theta_2}}\dd x\dd y\\
& \leqslant c_p\iint_{\mathbb{R}^{2N}}\frac{|u_n(x) -u_n(y)|^{p-2}(u_n(x) -u_n(y))-|u(x) -u(y)|^{p-2}(u(x) -u(y))}{|x|^{\theta_1}|x-y|^{N+ps}|y|^{\theta_2}} \nonumber\\
&\quad \times \Big(u_n(x)-u(x)-u_n(y)+u(y)\Big)\dd x\dd y\\
& = c_p\Big[B_{u_n}(u_n-u)- B_{u}(u_n-u)\Big]=o(1),
\end{align*}
and
\begin{align*}
\|u_n-u\|_{L^p_V(\mathbb{R}^N, |x|^{-\alpha})}^p & = \int_{\mathbb{R}^N}\frac{V(x)|u_n-u|^{p}}{|x|^{\alpha}}\dd x \\
& \leqslant c_p\int_{\mathbb{R}^N}\frac{V(x)(|u_n|^{p-2}u_n-|u|^{p-2}u)(u_n-u)}{|x|^{\alpha}}\dd x=o(1).
\end{align*}
Consequently, $\|u_n-u\|_W= ([u_n-u]_{W_{\theta}^{s,p}(\mathbb{R}^N)}^p +\|u_n-u\|_{L^p_V(\mathbb{R}^N, |x|^{-\alpha})}^p)^{\frac{1}{p}} \to0$ as $n\to\infty$.

\medskip 

\item {\scshape Case $1<p<2$:} Taking $\xi=u_n(x)-u_n(y)$ and $\eta=u(x)-u(y)$ in  (\ref{simon}), as $n\to\infty$, we have
\begin{align*}
\lefteqn{[u_n-u]_{W_{\theta}^{s,p}(\mathbb{R}^N)}^p}\\
& =\iint_{\mathbb{R}^{2N}}\frac{|u_n(x)-u(x)-u_n(y)+u(y)|^p}{|x|^{\theta_1}|x-y|^{N+ps}|y|^{\theta_2}}\dd x\dd y\\
& = \iint_{\mathbb{R}^{2N}}\frac{|(u_n(x)-u_n(y))-(u(x)-u(y))|^p}{|x|^{\theta_1}|x-y|^{N+ps}|y|^{\theta_2}}\dd x\dd y\\
& \leqslant  C_p \Bigl[\iint_{\mathbb{R}^{2N}}\frac{|u_n(x) -u_n(y)|^{p-2}(u_n(x) -u_n(y))-|u(x) -u(y)|^{p-2}(u(x) -u(y))}{|x|^{\theta_1}|x-y|^{N+ps}|y|^{\theta_2}} \nonumber\\
& \quad \times \Big(u_n(x)-u(x)-u_n(y)+u(y)\Big)\Bigr]^{\frac{p}{2}}\Bigl(\frac{|u_n(x)-u_n(y)|^p + |u(x)-u(y)|^p)}{{|x|^{\theta_1}|x-y|^{N+ps}|y|^{\theta_2}} }\Bigr)^{\frac{2-p}{2}}\dd x\dd y\\
& = C_p\big[B_{u_n}(u_n-u)-B_{u}(u_n-u) \big]^{p/2}([u_n]_{W^{s,p}_{\theta}(\mathbb{R}^N)}^p+[u]_{W^{s,p}_{\theta}(\mathbb{R}^N)}^p)^{(2-p)/2}\\
& \leqslant C_p\big[B_{u_n}(u_n-u)-B_{u}(u_n-u) \big]^{p/2}([u_n]_{W^{s,p}_{\theta}(\mathbb{R}^N)}^{p(2-p)/2}+[u]_{W^{s,p}_{\theta}(\mathbb{R}^N)}^{p(2-p)/2})\\
& \leqslant C\big[B_{u_n}(u_n-u)-B_{u}(u_n-u) \big]^{p/2}=o(1).
\end{align*}
Here we used the fact that $[u_n]_{W^{s,p}_{\theta}(\mathbb{R}^N)}$ and $[u]_{W^{s,p}_{\theta}(\mathbb{R}^N)}$ are bounded, and the elementary inequality
$$
(a+b)^{(2-p)/2}\leqslant a^{(2-p)/2}+b^{(2-p)/2}\ \ \mbox{for\ all}\ a,b\geqslant 0\ \mbox{and}\ 1<p<2.
$$
Moreover, by H\"{o}lder inequality and (\ref{eq:3.22chenb}), as $n\to\infty$,
\begin{align*}
\lefteqn{\|u_n-u\|_{L^p_V(\mathbb{R}^N, |x|^{-\alpha})}^p} \\
& = \int_{\mathbb{R}^N}\frac{V(x)|u_n-u|^{p}}{|x|^{\alpha}}\dd x \\
&\leqslant C_p\int_{\mathbb{R}^N}V(x) \bigl[\frac{(|u_n|^{p-2}u_n-|u|^{p-2}u)  (u_n-u)}{|x|^{\alpha}} \bigr]^{p/2}\Bigl(\frac{|u_n|^p+|u|^p}{|x|^{\alpha}}\Bigr)^{(2-p)/2}\dd x\\
&\leqslant C_p\left(\int_{\mathbb{R}^N}\frac{V(x)  (|u_n|^{p-2}u_n-|u|^{p-2}u)  (u_n-u)}{|x|^{\alpha}} \dd x\right)^{p/2}\\
& \quad \times\left(\int_{\mathbb{R}^N}\frac{V(x)(|u_n|^p+|u|^p)}{|x|^{\alpha}}\dd x\right)^{(2-p)/2}\\
&\leqslant C_p\left(\|u_n\|_{L^p_V(\mathbb{R}^N, |x|^{-\alpha})}^{p(2-p)/2}+\|u\|_{L^p_V(\mathbb{R}^N, |x|^{-\alpha})}^{p(2-p)/2}\right)\\
& \quad \times \left(\int_{\mathbb{R}^N}\frac{V(x)  (|u_n|^{p-2}u_n-|u|^{p-2}u)  (u_n-u)}{|x|^{\alpha}} \dd x\right)^{p/2}\\
& \leqslant C \left(\int_{\mathbb{R}^N}\frac{V(x)  (|u_n|^{p-2}u_n-|u|^{p-2}u)  (u_n-u)}{|x|^{\alpha}} \dd x\right)^{p/2}\to0.
\end{align*}
Thus $\|u_n-u\|_W=([u_n-u]_{W_{\theta}^{s,p}(\mathbb{R}^N)}^p +\|u_n-u\|_{L^p_V(\mathbb{R}^N, |x|^{-\alpha})}^p)^{\frac{1}{p}}\to0$ as $n\to\infty$. The proof is complete.
\end{proof}

\begin{proof}[Proof of Theorem~\ref{teo:chen}]
By Lemma~\ref{mage} we show the geometry of the functional $I$ associated with the problem~\eqref{problema:Kirchhof}. Furthermore, by Lemmas~\ref{lema:3.4chen} and~\ref{mage2} we show that the Cerami sequences for the functional are limited and that the functional $I$ verifies the Cerami condition, respectively. Therefore, we obtain that there exists a critical point of functional $I$, so problem~\eqref{problema:Kirchhof} has a nontrivial weak solution for any $\lambda>0$.  
\end{proof}

\subsubsection*{Conflict of interest}
On behalf of all authors, the corresponding author states that there is no
conflict of interest.

\subsubsection*{Data availability statement}
Data sharing is not applicable to this article as no new data were created or analyzed in this study.

\subsubsection*{Orcid}

\begin{tabbing}
\hspace{4.5cm}\=\kill
Ronaldo B. Assun\c{c}\~{a}o 
\> \url{https://orcid.org/0000-0003-3159-6815}\\ 
Olímpio H. Miyagaki
\> \url{https://orcid.org/0000-0002-5608-3760} \\ 
Rafaella F. S. Siqueira
\> \url{https://orcid.org/0009-0007-2271-7327}
\end{tabbing}

\bibliographystyle{IEEEtranS}
\bibliography{bibliografia}

\end{document}